\newtheorem{thm}{Theorem}[section]
\newtheorem{lem}[thm]{Lemma}
\newtheorem{prop}[thm]{Proposition}
\newtheorem{cor}[thm]{Corollary}
\newtheorem{assu-nota}[thm]{Assumption--Notation}
\theoremstyle{definition}
\newtheorem{rem}[thm]{Remark}
\newcommand{\C}{\mathbb C}
\newcommand{\Z}{\mathbb Z}
\newcommand{\F}{\mathbb F}
\newcommand{\G}{\mathbb G}
\newcommand{\pp}{\mathbb P}
\newcommand{\Oc}{\mathcal O}
\newcommand{\Fc}{\mathcal F}
\newcommand{\Ec}{\mathcal E}
\newcommand{\Mm}{\mathcal M}
\DeclareMathOperator{\Pic}{Pic}
\def\geq{\geqslant}
\def\leq{\leqslant}
\numberwithin{equation}{section}
\title[Birational geometry]{Birational geometry of the twofold symmetric product of a Hirzebruch surface via secant maps}
\author[Marco Andreatta, Ciro Ciliberto and Roberto Pignatelli]{Marco Andreatta, Ciro Ciliberto and Roberto Pignatelli} 
\address[Marco Andreatta] {Dipartimento di Matematica, Universit\`a di Trento, via Sommarive 14, 38123 Trento, Italy}
\email{marco.andreatta@unitn.it}
\address[Ciro Ciliberto]{Dipartimento di Matematica, Universit\`a di Roma Tor Vergata, via della Ricerca Scientifica, 00173 Roma, Italy}
\email{cilibert@mat.uniroma2.it}
\address[Roberto Pignatelli]{Dipartimento di Matematica, Universit\`a di Trento, via Sommarive 14, 38123 Trento, Italy}
\email{roberto.pignatelli@unitn.it}
\thanks{All authors are members of the GNSAGA group of INDAM. The first and the third author were partially supported by a grant of MUR: PRIN 2017 {\it Moduli Theory and Birational Classification}. The third author was partially supported  by the European Union - Next Generation EU, Mission 4 Component 2 - CUP E53D23005400001.\\
2020 \emph{Mathematics Subject Classification.} Primary 
14N05, 14N10; Secondary  14B05. \\
{Keywords: Rational normal scrolls, Hilbert schemes, Elementary contractions, Fano varieties, GIT stability.}
}
\begin{document}
\begin{abstract} In this paper, extending some ideas of Fano in \cite{Fa} and of the first and last author in \cite{AP}, we study the birational geometry of the Hilbert scheme of  0--dimensional subschemes of length 2 of a rational normal scroll $\F_n$.  This fourfold has three elementary contractions associated to the three faces of its nef cone. We study natural projective realizations of these contractions. In particular, given a smooth rational normal scroll $S_{a,b}$ of degree $r$ in $\pp^{r+1}$ with $1\leq a\leq b$ and $a+b=r$, i.e., 
$S_{a,b}=\pp(\Oc_{\pp^1}(a)\oplus \Oc_{\pp^1}(b))$ embedded in $\pp^{r+1}$ with its $\Oc(1)$ line bundle (from an abstract viewpoint $S_{a,b}\cong \F_{b-a}$), we consider the variety $X_{a,b}\subset \G(1,r+1)$ described by all lines that are secant or tangent to $S_{a,b}$. The variety $X_{a,b}$ is the image of some of the aforementioned contractions, it is smooth if $a>1$, and it is singular at a unique point if $a=1$. We compute the degree of $X_{a,b}$ and the local structure of the singularity of $X_{a,b}$ when $a=1$. Finally we discuss in some detail the case $r=4$, originally considered by Fano in \cite{Fa}, because the smooth hyperplane sections of $X_{2,2}$ and $X_{1,3}$ are the Fano 3--folds that appear as number 16 in the Mori--Mukai list of Fano 3--folds with Picard number 2. We prove that any smooth hyperplane section of 
$X_{2,2}$ is also a hyperplane section of $X_{1,3}$, and we discuss the GIT--stability of the smooth hyperplane sections of  $X_{1,3}$ where  $G$ is the subgroup of the projective automorphisms of $X_{1,3}$ coming from the ones of $S_{1,3}$.
\end{abstract}
\maketitle

\section{Introduction}\label{sec:0}
In 1949 Fano published his last paper on $3$-folds where he constructed a smooth  $3$-fold of degree $22$ in a projective space of dimension $13$ with canonical curve section, \cite{Fa}. The first and last authors recently revised in  \cite{AP} this paper with the dual purpose of providing  a detailed proof of all Fano's claims and  of setting up the construction in modern language. This $3$-fold is in fact a Fano manifold in modern sense, that is its anticanonical divisor is very ample, and it was neglected by next mathematicians. In \cite{AP} it was denoted as  {\it Fano's last Fano} (FlF)  and it was pointed out that it is the number 16 in the Mori--Mukai list of Fano 3--folds with Picard number 2 (see \cite{MM}).

Fano obtained his FlF has an hyperplane section of the $4$-fold contained in the Grassmannian of lines in $\pp^5$, which is the union of all secants (and tangents) lines of a general rational normal scroll of degree four in $\pp^5$. His ingenuous geometric construction fits particularly well with the algebraic concept of Hilbert schemes of points on a surface, developed few years later, and could be subject of many generalizations.  In this  paper we propose some of them.

\smallskip
Consider the Hilbert scheme of length two 0--dimensional subschemes of the Hirzebruch surfaces $\pi: \F_n\longrightarrow \pp^1$, which we denote by $\F_n[2]$. This abstract scheme was studied primarily by Fogarty, who proved that it is a smooth variety of dimension $4$ and Picard rank $3$, \cite{Fog}. 

Subsequently it was noted, see \cite{BC}, that $\F_n[2]$ has three elementary contractions associated to the three faces of its nef cone. One is the map $\phi_n: \F_n[2] \longrightarrow \F_n(2)$, which is the obvious map from the Hilbert scheme to the Chow scheme, or symmetric product. The other two, which we denote by $\phi_{n,i}: \F_n[2] \longrightarrow Z_{n,i}$ for $i=1,2$, are respectively described as follows.  The map $\phi_{n,1}$ contracts the divisor of all pairs of points on a fiber of $\pi$ to a  smooth, rational curve $\Gamma\subset Z_{n,1}$; one can see, Proposition (\ref{prop:z1}),  that $Z_{n,1}$ is smooth and $\phi_{n,1}$ is the blow--up of $\Gamma$. For $n\geq 1$, $\phi_{n,2}$ contracts the surface $\Ec_n\cong \pp^2$ of all pairs of points on $E$, the section of $\pi$ with self intersection $-n$, to a point $\mathfrak p$ . The map $\phi_{0,2}$ on the other side is as $\phi_{0,1}$, i.e.,  it is a smooth blow-down obtained contracting all  pairs of points on a fiber of the other ruling of  $\F_0$. 

Note that the exceptional loci of $\phi_{n,1}$ and $\phi_{n,2}$ are disjoint, so the two contractions can be performed independently obtaining a morphism $\psi_n: \F_n[2] \longrightarrow X_n$ into a variety $X_n$ which is smooth for $n=0$, whereas for $n>1$  it has a singular point in $\mathfrak p$.

To obtain a {\it projective realization} of the above varieties and morphisms we consider the smooth rational normal scrolls $S_{a,b}$ of degree $r$ in $\pp^{r+1}$. More precisely $S_{a,b}=\pp(\Oc_{\pp^1}(a)\oplus \Oc_{\pp^1}(b))$, 
with $1\leq a\leq b$ and $a+b=r$, embedded in $\pp^{r+1}$ with its $\Oc(1)$ line bundle. From an abstract viewpoint one has $S_{a,b}\cong \F_{b-a}$. 

Take the morphism
$$
\gamma_{a,b}: S_{a,b}[2]\cong \F_{b-a}[2] \longrightarrow \G(1,r+1)\subset \pp^{\frac {r(r+3)}2}
$$
to the Grassmannian of lines in $\pp^{r+1}$ in its Pl\"ucker embedding, acting in the following way: each 0--dimensional length 2 scheme $\mathfrak y\in S_{a,b}[2]$ is mapped by $\gamma_{a,b}$ to the line $\ell_\mathfrak y:=\langle \mathfrak y\rangle$ spanned in $\pp^{r+1}$ by $\mathfrak y$. 
We call $\gamma_{a,b}$  the \emph{secant map} of $S_{a,b}$ and we denote its image by $X_{a,b}$. 

The case $a=b=1$ is trivial, since $X_{1,1}=\mathbb G(1,3)$. On the other hand $X_{2,2}$ is smooth and its is the $4$-fold considered by Fano, whose smooth hyperplane sections are FlFs. 

The secant map 
$$\gamma_{a,b}:S_{a,b}[2]\longrightarrow X_{a,b}$$  
is a projective realization of the map $\phi_{b-a,1}: \F_{b-a}[2]\longrightarrow Z_{b-a,1}$ if $a\geq 2$. If $a=1$ it is projective realization of the map $\psi_{b-1}: \F_{b-1}[2]\longrightarrow X_{b-1}$.

\smallskip
In the first part of the paper we tackle two questions.  First we compute the degree of  $X_{a,b}$ in the Pl\"ucker embedding in $\pp^{\frac {r(r+3)}2}$; 
namely we prove that $\deg (X_{a,b})= 3r^2-8r+6$ (see Theorem (\ref{thm:deg})). 
The formula agrees with the computation in \cite{Cattaneo} of the degree of the image of the secant map for a general surface $S$; however the assumption in \cite{Cattaneo} is  quite  stronger, namely the surface $S$ has to be embedded by a $3$-very ample line bundle.

Then we prove that  $X_{a,b}$ is smooth, except for $X_{1,r-1}$, in which case  it has a unique singular point at $\mathfrak p$, whose tangent cone is the cone over a smooth threefold linear section of the Segre embedding of  $\pp^2\times \pp^{r-2}$ in $\pp^{3r-4}$ (see Theorem (\ref{thm:sing})).

In Section $4$ we discuss the case $r=4$ which gives rise to the FlF. The variety $X_{2,2}$ is, as we said, smooth while $X_{1,3}$ has a singular point. They are both Fano in Fano's sense, that is their general curve section is a smooth and canonical of genus $12$. Their general hyperplane section is a FlF and moreover any FLF is a hyperplane  section of a $X_{1,3}$ (Remark(\ref{rem:eq}). 

The surface $S_{1,3}$ is a specialization of $S_{2,2}$ and therefore $X_{1,3}$ is a specialization of $X_{2,2}$. Most of the section is devoted to describe in different ways how this specialization takes place. In particular in Remark(\ref{rem:qquad}) we give a nice geometric description of it via a series of birational maps, starting from the specialization of the smooth quadric $S_{1,1}$ to the singular quadric $S_{0,2}$. We use the facts that $X_{2,2}$ can be obtained as the blown-up of  $X_{1,1} =\mathbb G(1,2)$ along a conic not contained in a plane of $\mathbb G(1,3)$. 
These examples could be useful to test some conjectures on K-stability of deformations of Fano Varieties.

Finally in Section $5$ we approach the question of defining an appropriate moduli space for the FlFs, studying when a hyperplane section of $X_{1,3}$ is G-stable, where $G$ is the subgroup of the projective automorphisms of $X_{1,3}$ coming from the ones of $S_{1,3}$.

\section{Hirzebruch surfaces and their twofold symmetric product}\label{sec:1}

In this section we will consider Hirzebruch surfaces $\F_n$, with their structure morphism $\pi: \F_n\to \pp^1$. We denote by $F$ a fibre of $\pi$ and by $E$ a section of $\pi$ such that $E^2=-n$ (this section is unique if $n>0$). We will abuse notation and denote by $F$ and $E$ also the line bundles on $\F_n$ associated to $F$ and $E$. We will keep this notation throughout the paper.

We denote by $\F_n[2]$ the Hilbert scheme of length two 0--dimensional subschemes of $\F_n$, that is a smooth variety of dimension 4. We also have the symmetric product $\F_n(2)=	\F_n\times \F_n/\Z_2$ and the obvious morphism
$ f_n: \F_n[2]\longrightarrow \F_n(2)$.  

Let $L$ be a line bundle on $\F_n$. Then $L^{\boxtimes 2}:=L\boxtimes L$ is a line bundle on $\F_n\times \F_n$ invariant under the action of $\Z_2$, so $L^{\boxtimes 2}$ descends to a line bundle $L(2)$ on $\F_n(2)$. We denote by $L[2]$ the line bundle $f_n^*(L(2))$ on $ \F_n[2]$. In particular we can consider the line bundles $F[2]$ and $E[2]$ on $\F_n[2]$. On $\F_n[2]$ there is a third relevant divisor (or line bundle), namely the \emph{diagonal}, i.e., the set $B$ of all non--reduced subschemes in $\F_n[2]$, that is contracted to the $2$--dimensional diagonal of $\F_n(2)$ by $f_n$. It turns out that $B$ is divisible by $2$ in $\Pic(\F_n[2])$ so that we can consider the line bundle $\frac B2$. 
We note that $F[2]$ and $E[2]$ and $\frac B2$ are a basis of $\Pic(\F_n[2])$ (see \cite{Fog}).

The following result is contained in \cite [Thm. 1]{BC}:

\begin{thm}\label{thm:BC} The nef cone of $\F_n[2]$ is the convex hull of the rays generated by  
$$
F[2], \quad E[2]+nF[2], \quad E[2]+(n+1)F[2]-\frac B2.
$$
\end{thm}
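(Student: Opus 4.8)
The plan is to obtain $\mathrm{Nef}(\F_n[2])$ as the cone dual to the cone of curves $\NE(\F_n[2])$, which I will show is the simplicial cone spanned by three explicit effective curves. Since $\Pic(\F_n[2])$ has rank $3$ with basis $F[2], E[2], \tfrac B2$, everything reduces to producing three effective curves $C_0,C_1,C_2$, checking that the three displayed divisors are nef, and verifying that the pairing between the two triples is a permutation matrix; convex duality then finishes the argument. The geometry of the three contractions recalled in the Introduction dictates the choice of curves.

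First I would fix the curve classes. Let $C_0$ be a fibre of the Hilbert--Chow morphism $f_n$ over a point of the diagonal of $\F_n(2)$ (a line in the exceptional divisor $B$); let $C_1$ be the pencil $\{p_0,q\}$ with $p_0$ fixed and $q$ ranging over a fibre $F$ (a line in the $\pp^2$ of length--two subschemes supported on $F$); and let $C_2$ be the analogous pencil with $q$ ranging over $E$, a line in $\Ec_n\cong\pp^2$. All three are manifestly effective. The needed intersection numbers are elementary: since $L[2]=f_n^*L(2)$ is trivial on the $f_n$--contracted curve $C_0$ one gets $F[2]\cdot C_0=E[2]\cdot C_0=0$; the projection formula gives $L[2]\cdot C_i=L\cdot\gamma_i$ with $\gamma_1=F$ and $\gamma_2=E$, so that $F[2]\cdot C_1=0$, $E[2]\cdot C_1=1$ and $F[2]\cdot C_2=1$, $E[2]\cdot C_2=-n$; finally $\tfrac B2\cdot C_0=-1$ since $\Oc(B)$ restricts to $\Oc(-2)$ on the Hilbert--Chow fibre $C_0$, while a line in either $\pp^2$ is tangent to the discriminant conic cut out by $B$, giving $\tfrac B2\cdot C_1=\tfrac B2\cdot C_2=1$. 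Reading these off, the divisors $F[2]$, $E[2]+nF[2]$ and $E[2]+(n+1)F[2]-\tfrac B2$ pair with $(C_0,C_1,C_2)$ as $(0,0,1)$, $(0,1,0)$ and $(1,0,0)$ respectively: a permutation matrix.

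Next I would prove that each of the three divisors is semiample, hence nef, by exhibiting a morphism under which it is the pull--back of an ample class. For $F[2]$ this is the composite of $f_n$ with the second symmetric power of $\pi\colon\F_n\to\pp^1$, landing in $\pp^1(2)\cong\pp^2$. For $E[2]+nF[2]=(E+nF)[2]$ it is the second symmetric power of the semiample morphism defined by $|E+nF|$, which contracts $E$ to the vertex of a cone. The delicate one is $G:=E[2]+(n+1)F[2]-\tfrac B2=H[2]-\tfrac B2$ with $H:=E+(n+1)F$: here I would invoke the tautological (secant) construction, using the identity $c_1(H^{[2]})=H[2]-\tfrac B2$ together with the fact that $H=1\cdot E+(n+1)F$ meets the very--ampleness bound $\beta-n\alpha\ge 1$ on $\F_n$ (with equality), so the rank--two tautological bundle $H^{[2]}$ is globally generated and hence so is $G=\det H^{[2]}$; thus $G=\gamma^*\Oc(1)$ for the secant map $\gamma\colon\F_n[2]\to\G(1,n+3)$ in its Pl\"ucker embedding, which is exactly $\gamma_{1,n+1}$ of the Introduction.

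The conclusion is then pure convex duality. The three nef classes are linearly independent, so they span a simplicial cone $\sigma\subseteq\mathrm{Nef}$; the three effective curves span a simplicial cone $\tau\subseteq\NE$; and the permutation--matrix pairing says precisely that $\tau=\sigma^\vee$. Dualizing $\tau\subseteq\NE$ gives $\mathrm{Nef}=\NE^\vee\subseteq\tau^\vee=\sigma$, and together with $\sigma\subseteq\mathrm{Nef}$ this forces $\mathrm{Nef}=\sigma$, as claimed. \textbf{The main obstacle} is the semiampleness of the third generator $G$: the first two divisors descend transparently from the nef cone of $\F_n$ through the symmetric product, but $G$ carries the $-\tfrac B2$ correction, and its nefness genuinely requires the tautological/secant input — equivalently, the verification that $H=E+(n+1)F$ is very ample and that $\det H^{[2]}$ is the pull--back of the Pl\"ucker polarization.
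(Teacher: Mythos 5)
Your argument is correct, but note that the paper does not prove this statement at all: it is quoted verbatim from Bertram--Coskun (\cite[Thm.~1]{BC}), and the text only records, further down, the intersection numbers of the extremal divisors with the curves $F(2)$ and $E(2)$ that it needs later. What you have written is therefore a self-contained proof where the paper has only a citation, and it runs along the same lines as the source: exhibit three effective curve classes and three nef divisor classes pairing as a permutation matrix, then conclude by duality ($\mathrm{Nef}=\NE^{\vee}\subseteq\tau^{\vee}=\sigma\subseteq\mathrm{Nef}$). Your curves $C_1,C_2$ differ from the paper's $F(2),E(2)$ only superficially (a pencil $\{p_0\}+F$ and a $g^1_2$ on $F$ are both lines in $F(2)\cong\pp^2$, hence numerically equal), and your intersection numbers check out against those quoted from \cite{BC} on p.~23, including $\tfrac{B}{2}\cdot C_0=-1$ via $\Oc(B)|_{C_0}\cong\Oc(-2)$. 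The nefness of the first two generators via descent of base-point-freeness through the symmetric product is fine, and you correctly isolate the real content in the third generator, handled via $c_1(H^{[2]})=H[2]-\tfrac{B}{2}$ and global generation of the tautological bundle for the ($1$-very ample) bundle $H=E+(n+1)F$ --- which is precisely the secant map $\gamma_{1,n+1}$ of the paper. The only points you lean on without proof are standard (the formula for $c_1(L^{[2]})$ and the equivalence between $1$-very ampleness and global generation of $L^{[2]}$), and a fully rigorous write-up should cite or verify them; otherwise the proof is complete.
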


The three faces of this cone correspond to contractions of extremal rays (i.e., the relative Picard group is $\Z$), and precisely the contractions in question are described in the following diagram
$$
 \xymatrix{ Z_{n,1}& \ar[l]_{\phi_{n,1}} \F_n[2] \ar[r]^{\phi_{n,2}}  \ar[d]_{\phi_n} & Z_{n,2} \\
      &\F_n(2) } 
$$
 where:\\
 \begin{inparaenum}
 \item [(i)] $\phi_n=f_n$ is the map determined by the face bounded by $F[2]$ and  $E[2]+nF[2]$;\\
 \item [(ii)] $\phi_{n,1}$  is the map determined by the face bounded by $F[2]$ and  $E[2]+(n+1)F[2]-\frac B2$;\\
 \item [(iii)] $\phi_{n,2}$  is the map determined by the face bounded by $E[2]+nF[2]$ and  $E[2]+(n+1)F[2]-\frac B2$.
\end{inparaenum}

Let $F(2)$ be the curve in $\F_n[2]$ described by the pairs of points of a $g^1_2$ on a curve $F$. One has $F(2)\cdot F[2]=F(2)\cdot (E[2]+(n+1)F[2]-\frac B2)=0$ (see \cite [p. 23]{BC}), hence $\phi_{n,1}$ contracts the divisor $\Fc_n$ on $\F_n[2]$ described by all pairs of points on a curve in $|F|$, to a curve $\Gamma\subset Z_{n,1}$ isomorphic to $\pp^1$. If $n=0$, similarly  $\phi_{0,2}$ contracts the divisor $\Fc'_0$ on $\F_0[2]$ described by all pairs of points on a curve in $|E|$, to a curve $\Gamma'\subset Z_{0,2}$ isomorphic to $\pp^1$. 
If $n>0$, let $E(2)$ be the curve in $E[2]$ described by all pairs of points of a $g^1_2$ instead. Then
$
E(2) \cdot \left( E[2]+nF[2] \right) = E(2) \cdot \left( E[2] + \left( n+1 \right) F[2] -\frac{B}2 \right) =0
$
(see again  \cite [p. 23]{BC}), hence  $\phi_{n,2}$ contracts to a point $\mathfrak p$ the surface $\Ec_n\cong \pp^2$ of all pairs of points on $E$. 

\begin{prop}\label{prop:z1} The 4--dimensional variety $Z_{n,1}$ is smooth and $\F_n[2]$ is the blow--up of $Z_{n,1}$ along the smooth rational curve $\Gamma$. If $n=0$, the same  happens for  $Z_{0,2}$.
\end{prop}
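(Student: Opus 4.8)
The plan is to realize $\phi_{n,1}$ as the blow--down of $\Fc_n$ by verifying the hypotheses of a standard contractibility criterion for smooth divisors (of Nakano--Fujiki type): if a smooth prime divisor $E$ in a smooth variety $Y$ carries a $\pp^r$--bundle structure $p\colon E\to \Gamma$ over a smooth base with $\Oc_E(E)$ restricting to $\Oc_{\pp^r}(-1)$ on every fibre, then there is a smooth variety $Z$ and a morphism $Y\to Z$ realizing $Y$ as the blow--up of $Z$ along $\Gamma\hookrightarrow Z$, with exceptional divisor $E$. Here $Y=\F_n[2]$, $E=\Fc_n$ and $r=2$. So the proof reduces to two checks: that $\Fc_n\to\Gamma$ is a $\pp^2$--bundle, and that $\Oc_{\Fc_n}(\Fc_n)$ restricts to $\Oc_{\pp^2}(-1)$ on each fibre $F[2]\cong\pp^2$.

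First I would identify $\Fc_n$ with the relative Hilbert scheme $\mathrm{Hilb}^2(\F_n/\pp^1)$ of the ruling $\pi$, since a length--two subscheme lies in $\Fc_n$ exactly when it is contained in a single fibre. As $\pi$ is a $\pp^1$--bundle, the relative Hilbert scheme of length--two subschemes coincides with the relative symmetric square, which is a $\pp^2$--bundle over $\pp^1$; its projection sends $\mathfrak y$ to the fibre containing it and is therefore exactly $\phi_{n,1}|_{\Fc_n}\colon\Fc_n\to\Gamma$, with fibres $F[2]\cong\pp^2$. In particular $\Gamma\cong\pp^1$.

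For the normal bundle I would argue fibrewise. Fix a fibre $F$; since $F^2=0$ one has $N_{F/\F_n}\cong\Oc_F$, canonically identified with $T_\Gamma|_{[F]}\otimes\Oc_F$, so at $\mathfrak y=\{p,q\}\in F[2]$ the directions transverse to $\Fc_n$ are the quotient of $N_{F/\F_n}|_p\oplus N_{F/\F_n}|_q$ by the diagonal direction moving the whole fibre in the pencil $\Gamma$. As $\mathfrak y$ varies over $F[2]=\mathrm{Hilb}^2(\pp^1)=\pp^2$ this yields an exact sequence $0\to\Oc_{\pp^2}\to\mathcal N\to N_{\Fc_n/\F_n[2]}|_{F[2]}\to 0$, where $\mathcal N$ is, up to a trivial twist, the tautological rank--two bundle $\Oc_{\pp^1}^{[2]}$ whose fibre is $H^0(\Oc_\mathfrak y)$. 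Hence $N_{\Fc_n/\F_n[2]}|_{F[2]}\cong\det\mathcal N$, and the standard formula for the first Chern class of the tautological bundle gives $\det\mathcal N\cong\Oc_{\pp^2}(-1)$, since the boundary of $\mathrm{Hilb}^2(\pp^1)=\pp^2$ is a conic. Equivalently one checks $\Fc_n\cdot F(2)=-1$ directly from the intersection theory of $\F_n[2]$ recorded in \cite{BC}.

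With both checks in hand, the criterion produces a smooth variety $Z$ and a blow--down $\sigma\colon\F_n[2]\to Z$ of $\Fc_n$ onto $\Gamma$. Finally I would identify $\sigma$ with $\phi_{n,1}$: both are morphisms with connected fibres that are isomorphisms off $\Fc_n$ and contract precisely the curves of the extremal ray spanned by $F(2)$, so by the universal property of the contraction of that ray they coincide, $Z\cong Z_{n,1}$ and $\sigma=\phi_{n,1}$. This proves $Z_{n,1}$ is smooth and $\phi_{n,1}$ is the blow--up of $\Gamma$. The case $n=0$ for $Z_{0,2}$ follows verbatim after applying the automorphism of $\F_0=\pp^1\times\pp^1$ exchanging the two rulings, which carries $\phi_{0,1}$ to $\phi_{0,2}$ and $\Fc_0$ to $\Fc'_0$. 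The main obstacle is the normal bundle computation, namely pinning down that the fibrewise restriction is exactly $\Oc_{\pp^2}(-1)$ rather than some other negative twist; everything else is a bundle identification or a formal consequence of the contractibility criterion.
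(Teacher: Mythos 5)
Your argument is correct, but it follows a genuinely different route from the paper. The paper's proof is very short: it records $E[2]\cdot F(2)=1$ (so $\phi_{n,1}$ is the contraction of the ray $R$ generated by $F(2)$), computes $K_{\F_n[2]}=-2E[2]-(n+2)F[2]$ to get the length $\ell(R)=-K_{\F_n[2]}\cdot F(2)=2$, observes that this equals the fibre dimension of $\phi_{n,1}$, and then invokes the Andreatta--Occhetta theorem \cite[Thm. 5.1]{AO} on special rays, which directly yields that the contraction is the blow--up of a smooth variety along a smooth centre. You instead verify by hand the hypotheses of the Fujiki--Nakano contractibility criterion: you identify $\Fc_n$ with the relative $\mathrm{Hilb}^2$ of the ruling, hence a $\pp^2$--bundle over $\pp^1$, and you pin down the fibrewise normal bundle as $\Oc_{\pp^2}(-1)$ via the tautological bundle $\Oc_{\pp^1}^{[2]}$ (equivalently $\Fc_n\cdot F(2)=-1$, which indeed checks out since $\Fc_n\equiv F[2]-\frac B2$, giving $0-1=-1$), and then identify the resulting blow--down with $\phi_{n,1}$ by uniqueness of the contraction of $R$. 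What the paper's approach buys is brevity and uniformity --- everything reduces to two intersection numbers already available from \cite{BC} plus a black--box structure theorem; what your approach buys is an explicit, self--contained description of the exceptional divisor and its normal bundle, at the cost of the tautological--bundle computation you yourself flag as the delicate point (which is, however, correct) and of the final identification step $Z\cong Z_{n,1}$, which does need the small uniqueness argument you supply. Your reduction of the $n=0$ case for $Z_{0,2}$ to the $\phi_{0,1}$ case via the ruling--swapping automorphism of $\F_0$ matches the paper's ``the final assertion is clear.''
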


\begin{proof} One has $E[2]\cdot F(2)=1 $ (see \cite [p. 23]{BC}), hence $\phi_{n,1}$ is the contraction of the ray $R$ generated by $F(2)$. Now notice that 
$$
K_{\F_n[2]}\equiv K_{\F_n}[2]=-2E[2]-(n+2)F[2]
$$
(see \cite [Proof of Cor. 3]{BC}). Hence
$$
\ell(R):=\inf\{-K_{\F_n[2]}\cdot C| C\in R\}=2.
$$
Since $\ell(R)$ equals the dimension of the fibres of $\phi_{n,1}$, we can apply \cite [Thm. 5.1]{AO}, which implies the first assertion. The final assertion is clear. \end{proof}

If $n>1$, since the morphism $\phi_{n,2}$ is a small (hence crepant) contraction, the variety $Z_{n,2}$ cannot be smooth at $\mathfrak p$, whereas it is smooth off $\mathfrak p$. We do not dwell now on the nature of the singularity of $Z_{n,2}$ at $\mathfrak p$, but we will return on this later. For the time being we notice that the loci $\Fc_0$ and $\Fc'_0$, and the loci
$\Fc_n$ and $\Ec_n$ for $n>1$, contracted by $\phi_{n,1}$ and $\phi_{n,2}$ are disjoint. So the contractions of $\Fc_n$ and $\Ec_n$ can be performed independently obtaining a morphism $\psi_n$ appearing in the following commutative diagram
\begin{equation}\label{eq:diag}
 \xymatrix{  \F_n[2] \ar[r]^{\phi_{n,1}}  \ar[d]_{\phi_{n,2}}\ar[dr]^{\psi_n} & Z_{n,1}\ar[d]^{\psi_{n,1}} \\
      Z_{n,2}\ar[r]_{\psi_{n,2}}  & X_n } 
      \end{equation}
The variety $X_n$ is smooth for $n=0$, whereas for $n>1$  it is singular at the point $\mathfrak q=\psi_{n,2}(\mathfrak p)$, where $X_n$ has the same singularity as $Z_{n,2}$.

\section{The secant map}

Let us consider now a linearly normal smooth rational normal scroll $S_{a,b}$ of degree $r$ in $\pp^{r+1}$, where
$$
S_{a,b}=\pp(\Oc_{\pp^1}(a)\oplus \Oc_{\pp^1}(b))
$$
with $1\leq a\leq b$ and $a+b=r$, embedded in $\pp^{r+1}$ with its $\Oc(1)$ line bundle. From an abstract viewpoint one has $S_{a,b}\cong \F_{b-a}$. We will consider the morphism
$$
\gamma_{a,b}: S_{a,b}[2]\cong \F_{b-a}[2] \longrightarrow \G(1,r+1)\subset \pp^{\frac {r(r+3)}2}
$$
to the Grassmannian of lines in $\pp^{r+1}$ in its Pl\"ucker embedding, acting in the following way: each 0--dimensional lenght 2 scheme $\mathfrak y\in S_{a,b}[2]$ is mapped by $\gamma_{a,b}$ to the line $\ell_\mathfrak y:=\langle \mathfrak y\rangle$ spanned in $\pp^{r+1}$ by $\mathfrak y$. We call $\gamma_{a,b}$  the \emph{secant map} of $S_{a,b}$. Its image is denoted by $X_{a,b}$. 

\begin{lem}\label{sec}  Let $Z \subset \pp^r$ be a closed subscheme whose ideal is generated by quadrics. 

Ler $\mathfrak y$ be a length $2$ subscheme of $Z$ and let $l_{\mathfrak y}$ the line $ \left\langle \mathfrak y \right\rangle$ in $\pp^r$. Then the intersection of  $l_{\mathfrak y}$  with $Z$ consists of $\mathfrak y$ unless $l_{\mathfrak y}$  is contained in $Z$. 
In particular given two distinct subschemes $\mathfrak y,\mathfrak z$ of length $2$ in $Z$, $l_{\mathfrak y}=l_{\mathfrak z}$ if and only if there is a line $l$ contained in $Z$ such that both ${\mathfrak y}$ and ${\mathfrak z}$ are subschemes of $l$.
\end{lem}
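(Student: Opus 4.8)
The plan is to reduce everything to the behaviour of a single quadric restricted to the line $\ell_{\mathfrak y}$. First I would record two elementary facts. Any length $2$ subscheme $\mathfrak y\subset \pp^r$ spans a line, so $\ell_{\mathfrak y}=\langle \mathfrak y\rangle$ is a well-defined line with $\mathfrak y\subseteq \ell_{\mathfrak y}$; and on $\ell_{\mathfrak y}\cong \pp^1$ any quadratic form either vanishes identically or cuts out a zero-scheme of length exactly $2$. These are the only geometric inputs needed, and the rest is a scheme-theoretic length count.

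To prove the first assertion I would argue by cases on the generating quadrics of the ideal $I(Z)$. If every generating quadric restricts to zero on $\ell_{\mathfrak y}$, then, since they generate $I(Z)$, the restriction of the whole ideal vanishes and $\ell_{\mathfrak y}\subseteq Z$. Otherwise there is a generator $Q$ with $Q|_{\ell_{\mathfrak y}}\neq 0$, whose zero-scheme $W=V(Q|_{\ell_{\mathfrak y}})$ on $\ell_{\mathfrak y}$ has length $2$. Because $Q\in I(Z)$, scheme-theoretically $\ell_{\mathfrak y}\cap Z\subseteq W$, while $\mathfrak y\subseteq \ell_{\mathfrak y}\cap Z$ since $\mathfrak y$ is a closed subscheme of both $Z$ and $\ell_{\mathfrak y}$. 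Comparing lengths of subschemes of $\ell_{\mathfrak y}$ gives
$$
2=\mathrm{length}(\mathfrak y)\le \mathrm{length}(\ell_{\mathfrak y}\cap Z)\le \mathrm{length}(W)=2,
$$
so all three coincide and the inclusions are equalities; hence $\ell_{\mathfrak y}\cap Z=\mathfrak y$.

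For the second assertion the backward implication is immediate: if a line $\ell\subseteq Z$ contains both $\mathfrak y$ and $\mathfrak z$, then each of these length $2$ subschemes spans $\ell$, so $\ell_{\mathfrak y}=\ell=\ell_{\mathfrak z}$. Conversely, assume $\ell_{\mathfrak y}=\ell_{\mathfrak z}=:\ell$. If $\ell\not\subseteq Z$, the first assertion applied to $\mathfrak y$ and to $\mathfrak z$ yields $\mathfrak y=\ell\cap Z=\mathfrak z$, contradicting $\mathfrak y\neq \mathfrak z$; therefore $\ell\subseteq Z$, and this line contains both subschemes, as required.

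I do not expect a serious obstacle here. The one point demanding care is that the central length comparison must be carried out scheme-theoretically, so that non-reduced (curvilinear) subschemes $\mathfrak y$ are handled correctly and the inclusions $\mathfrak y\subseteq \ell_{\mathfrak y}\cap Z\subseteq W$ of equal finite colength force genuine equalities of schemes rather than merely of underlying point sets.
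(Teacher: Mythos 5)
Your argument is correct and is precisely the expansion of what the paper leaves implicit: its proof of this lemma is a one-line remark that the claim is "an immediate consequence of the fact that the ideal of $Z$ is generated by quadrics." Your length comparison $2\le \mathrm{length}(\ell_{\mathfrak y}\cap Z)\le \mathrm{length}(W)=2$ is exactly the intended mechanism, and your care with the scheme-theoretic (curvilinear) case is appropriate.
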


\begin{proof}  This is an immediate consequence of the fact that the ideal of $Z$ is generated by quadrics. \end{proof}

Note that lemma \ref{sec} applies to $S_{a,b}$ whose ideal is generated by quadrics.

Now we have two main questions to deal with:\\
\begin{inparaenum}
\item [(a)] what is the degree of  $X_{a,b}$ in the Pl\"ucker embedding in $\pp^{\frac {r(r+3)}2}$?\\
\item [(b)] what are the smooth points and the singular points of $X_{a,b}$, if any?
\end{inparaenum}

\subsection{The degree of \texorpdfstring{$X_{a,b}$}{X{a,b}}}
To compute the degree of  $X_{a,b}$ we extend a beautiful argument by Fano (see \cite {Fa}), revisited  in \cite {AP}. 

We compute the degree of the 4--dimensional variety $X_{a,b}$  as the degree of a surface obtained cutting $X_{a,b}$ with  two independent hyperplanes  in $\pp^{\frac {r(r+3)}2}$. We will consider two special hyperplane sections of $\G(1,r+1)$ given by the lines that intersect two linear subspaces $\sigma_1, \sigma_2$ of codimension 2 in  $\pp^{r+1}$ that are in special position, i.e., they span a general hyperplane $\sigma$ of $\pp^{r+1}$ and are general there, and therefore intersect in a general subspace $\pi$ of codimension 3 in $\pp^{r+1}$. 

Then the set of lines in the Grassmanian that intersect both $\sigma_1, \sigma_2$ breaks up in two codimension 2 Schubert cycles: the lines contained in the hyperplane $\sigma$ and the lines intersecting the subspace $\pi$. We denote by $X_{a,b}^\sigma$ the subvariety of $X_{a,b}$ consisting of the lines contained in $\sigma$ and by $X_{a,b}^\pi$ the subvariety of $X_{a,b}$ consisting of the lines intersecting $\pi$.

\begin{lem}\label{lem:int} One has
$$
\deg (X_{a,b})=\deg (X_{a,b}^\sigma)+\deg (X_{a,b}^\pi).
$$
\end{lem}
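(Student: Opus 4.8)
The plan is to realize $\deg(X_{a,b})$ as the degree of the surface obtained by cutting the fourfold $X_{a,b}$ with the two Schubert hyperplanes $H_1,H_2\subset\pp^{\frac{r(r+3)}2}$ determined by $\sigma_1$ and $\sigma_2$, and then to show that this surface splits, as a cycle, into the two pieces $X_{a,b}^\sigma$ and $X_{a,b}^\pi$. Write $h$ for the Pl\"ucker hyperplane class on $\G(1,r+1)$, so that $\deg(X_{a,b})=\int_{X_{a,b}}h^2$ and $h$ is represented by the Schubert cycle of lines meeting a fixed codimension-$2$ subspace. Each $H_i$ is such a cycle, so $[H_1]\cdot[H_2]=h^2$; choosing $\sigma_1,\sigma_2$ general subject only to spanning $\sigma$ and meeting along $\pi$, the intersection $X_{a,b}\cap H_1\cap H_2$ has the expected dimension $2$, and $\deg(X_{a,b})=\deg(X_{a,b}\cap H_1\cap H_2)$ since the degree of a fourfold equals that of a general codimension-$2$ linear section.

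Next I would establish the set-theoretic decomposition. A line $\ell\in X_{a,b}$ lies in $H_1\cap H_2$ exactly when $\ell$ meets both $\sigma_1$ and $\sigma_2$. If the two incidence points coincide they lie in $\sigma_1\cap\sigma_2=\pi$, so $\ell\in X_{a,b}^\pi$; if they are distinct, then $\ell$ contains two distinct points of $\sigma_1,\sigma_2\subset\sigma$ and is therefore contained in $\sigma$, so $\ell\in X_{a,b}^\sigma$. Hence $X_{a,b}\cap H_1\cap H_2=X_{a,b}^\sigma\cup X_{a,b}^\pi$ as sets. On the level of Schubert calculus this mirrors the Pieri relation writing $h^2$ as the sum of the class of lines contained in a hyperplane (namely $\sigma$) and the class of lines meeting a codimension-$3$ subspace (namely $\pi$).

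It then remains to check that the decomposition is additive on degrees. I would argue that for general $\sigma_1,\sigma_2$ in the prescribed special position both $X_{a,b}^\sigma$ and $X_{a,b}^\pi$ are pure of dimension $2$, that they share no two-dimensional component (their overlap consists of lines lying in $\sigma$ \emph{and} meeting $\pi$, a locus of dimension at most $1$), and that $X_{a,b}$ meets $H_1\cap H_2$ transversally at a general point of each piece, so that both appear with multiplicity $1$. Granting this, $X_{a,b}\cap H_1\cap H_2$ is the reduced union of its two pure two-dimensional components, and since degree is additive over such components we obtain $\deg(X_{a,b})=\deg(X_{a,b}^\sigma)+\deg(X_{a,b}^\pi)$.

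The main obstacle I anticipate is precisely this last transversality-and-purity check. The hyperplanes $H_1,H_2$ are special points of the dual variety of $\G(1,r+1)$, so a bare Bertini argument does not apply directly, and one must exploit the freedom in choosing $\sigma_1,\sigma_2$ generically within the constraint (spanning $\sigma$, meeting along $\pi$) to rule out excess dimension and nonreduced structure, and to ensure that neither piece is empty or carries a hidden multiplicity. Controlling the overlap locus $X_{a,b}^\sigma\cap X_{a,b}^\pi$ and verifying it stays in dimension $\le 1$ is the delicate point; once this is secured the additivity of the degree is formal.
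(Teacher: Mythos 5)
Your overall strategy coincides with the paper's: cut $X_{a,b}$ with the two Schubert hyperplanes attached to $\sigma_1,\sigma_2$, observe that set-theoretically the intersection is $X_{a,b}^\sigma\cup X_{a,b}^\pi$, and reduce the lemma to showing that each piece appears with multiplicity one. Your set-theoretic decomposition argument (coincident incidence points force $\ell$ to meet $\pi$, distinct ones force $\ell\subset\sigma$) is correct. But the proposal stops exactly where the real work begins: you write that you ``would argue'' purity, absence of a common two-dimensional component, and transversality, and then explicitly flag this as the main anticipated obstacle without resolving it. That step is essentially the entire content of the paper's proof, so as it stands the proposal has a genuine gap rather than a complete argument.

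Two things are missing. First, irreducibility (not merely purity) of the two pieces: the paper shows $X_{a,b}^\sigma$ is irreducible because it is the secant variety of the rational normal curve $S_{a,b}\cap\sigma$, and shows $X_{a,b}^\pi$ is irreducible by identifying a dense open subset with the pairs of points in fibres of the projection $S_{a,b}\to\pp^2$ with center $\pi$, whose monodromy is the full, hence $2$-transitive, symmetric group. Second, and more importantly, the multiplicity-one statement. You are right that Bertini does not apply to these special hyperplanes; the paper gets around this by a direct computation. It writes down coordinates on a chart $U$ of $\G(1,r+1)$ centered at a line of the ruling of $S_{a,b}$, uses the proportionality of the quadratic equations cut out on a nearby secant line to show that $X_{a,b}\cap U$ is a closed subset isomorphic to $\C^4$ with coordinates $\xi_1,\eta_1,\xi_{b+2},\eta_{b+2}$, and then, for an explicit choice of $\sigma_1,\sigma_2$, computes the two hyperplane conditions in this chart to be $\xi_1=1$ and $(\xi_1-1)\eta_{b+2}=\eta_1\xi_{b+2}$; their common zero locus $\{\xi_1=1,\ \eta_1\xi_{b+2}=0\}$ is visibly the reduced union of two components, one lying in $X_{a,b}^\sigma$ and one in $X_{a,b}^\pi$. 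Combined with irreducibility this gives generic multiplicity one along each component for that specific configuration, hence for the general one by semicontinuity. To complete your proof you must supply an argument of this kind, or some other concrete verification of generic transversality along both components; nothing in what you wrote substitutes for it.
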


\begin{proof} The assertion will follow once we prove that $X_{a,b}^\sigma\cup X_{a,b}^\pi$, with its reduced structure, is the scheme theoretical intersection of $X_{a,b}$ with the aforementioned two hyperplanes. To prove this, we will make a computation in coordinates and we will imitate a similar computation made in \cite {AP}. 

First of all we prove that for a general choice of $\sigma$ and $\pi$, $X_{a,b}^\sigma$ and $X_{a,b}^\pi$ are both irreducible.

The assertion is trivial for $X_{a,b}^\sigma$ that is the secant variety of the rational curve intersection of $X_{a,b}$ with $\sigma$. 

As for $X_{a,b}^\pi$, we consider the linear projection $f \colon S_{a,b} \rightarrow \pp^2$ with center  $\pi$. This is a degree $r$ finite cover whose monodromy is the full symmetric group (\cite{pirolaschl, zariski}). 
Then a dense open subset of $X_{a,b}^\pi$ can be identified with the pairs of points contained in any fibre of $f$ consisting of $r$ distinct points. Since the symmetric group is $2-$transitive this open subset is irreducible.

Next we introduce homogeneous coordinates $[x_0,\ldots, x_{r+1}]$ in $\pp^{r+1}$ so that $S_{a,b}$ has equations
\begin{equation*}\label{eq:scroll}
{\rm rk}
 \left(
\begin{array}{cccccc}
x_0&\ldots &x_{b-1}&x_{b+1}&\ldots&x_r  \\
x_{1}&\ldots &x_b&x_{b+2}&\ldots&x_{r+1}
\end{array}
\right)<2.
\end{equation*}
Consider the line $\ell$ with equations $x_{1}=\cdots =x_b=x_{b+2}=\cdots=x_{r+1}=0$, that is a line of the ruling of $S_{a,b}$. An open  neighborhood of $\ell$ in $\G(1,r+1)$ consists of all lines joining the points whose homogeneous coordinates are given by the rows of the following matrix
$$
 \left(
\begin{array}{ccccccccc}
1&\xi_1&\ldots &\xi_{b}&0&\xi_{b+2}&\ldots&\xi_{r+1}  \\
0&\eta_1&\ldots &\eta_b&1&\eta_{b+2}&\ldots&\eta_{r+1}
\end{array}
\right).
$$
This tells us that 
$$
\xi_1, \ldots, \xi_{b}, \xi_{b+2}, \ldots, \xi_{r+1},  \eta_1, \ldots , \eta_b, \eta_{b+2},\ldots,\eta_{r+1}
$$ 
are coordinates of a chart $U$ of $\G(1,r+1)$ centered at $\ell$. A line $t$ parametrized by a point of $U$ has parametric equations of the form
$$
x_0=\lambda, x_{b+1}=\mu, x_i=\lambda \xi_i+\mu \eta_i, \quad i\in \{1,\ldots,r+1\}\setminus \{b+1\}
$$
with $[\lambda, \mu]\in \pp^1$.

The intersection of $t$ with $S_{a,b}$ is obtained by solving in $\lambda, \mu$ the system of equations 
{\tiny
$$
{\rm rk}
 \left(
\begin{array}{ccccccccc}
\lambda&\lambda \xi_{1}+\mu \eta_{1}&\ldots &\lambda \xi_{b-1}+\mu \eta_{b-1} &\mu& \lambda \xi_{b+2}+\mu \eta_{b+2}& \ldots&\lambda \xi_{r}+\mu \eta_{r}   \\
\lambda \xi_{1}+\mu \eta_{1}&\ldots&\ldots &\lambda \xi_{b}+\mu \eta_{b}&\lambda \xi_{b+2}+\mu \eta_{b+2}&\ldots&\ldots&\lambda \xi_{r+1}+\mu \eta_{r+1} 
\end{array}
\right)<2
$$}
that are quadratic in $\lambda,\mu$. 
The line $t$ is secant (or tangent) to $S_{a,b}$ if and only if all the equations in $\lambda, \mu$ that we obtain in this way are proportional. One of these equations, obtained by considering the minor determined by the first and $(b+1)$--th columns  is the following
\begin{equation}\label{eq:prima}
\lambda^2\xi_{b+2}+\lambda\mu (\eta_{b+2}-\xi_1)-\mu^2\eta_1=0
\end{equation}
Suppose that for the line $t$ one has $\eta_1 \neq 0$.

Another equation,  obtained by considering the minor determined by the first and second columns,  is the following
$$
\lambda^2(\xi_2-\xi_1^2)+\lambda\mu(\eta_2-2\xi_1\eta_1)-\mu^2\eta_1^2=0
$$
Since this equation has to be proportional to the one in \eqref {eq:prima}, we get
\begin{equation*}\label{eq:porc}
\xi_2=\xi_1^2+\eta_1\xi_{b+2}, \quad \eta_2=\xi_1\eta_1+\eta_1\eta_{b+2}
\end{equation*}
so that $\xi_2,\eta_2$ can be expressed as polynomials in $\xi_1,\eta_1, \xi_{b+2}, \eta_{b+2}$. Now we claim that also $\xi_i, \eta_i$, with $i\in \{3,\ldots, r+1\}\setminus \{b+2\}$ can be expressed as polynomials in $\xi_1,\eta_1, \xi_{b+2}, \eta_{b+2}$. This can be proved by induction on $i$. Indeed, assume we have proved the assertion for $i$. Then consider the equation 
$$
\lambda (\lambda \xi_{i+1}+\mu \eta_{i+1})=(\lambda\xi_1+\mu\eta_1)(\lambda \xi_{i}+\mu \eta_{i}).
$$
Since this has to be proportional to the one in \eqref {eq:prima}, we get
\begin{equation}\label{eq:plop}
\xi_{i+1}=\xi_1\xi_i+\eta_i\xi_{b+2}, \quad \eta_{i+1}=\xi_i\eta_1+\eta_i\eta_{b+2}
\end{equation}
and, applying induction,  from this we see that also $\xi_{i+1},\eta_{i+1}$ can be expressed as polynomials in $\xi_1,\eta_1, \xi_{b+2}, \eta_{b+2}$. 

Now let $Z\subset X_{a,b}\cap U$ be the set of points with $\eta_1=0$. This is a proper closed subset of $X_{a,b}\cap U$. We notice that $X_{a,b}\cap U \setminus Z$ is contained in $U'$ where $U'$ is the closed embedding of ${\mathbb C}^4$ in $U={\mathbb C}^{2r}$ defined by the equations \eqref{eq:plop} (for $i \in \left\{2 , \ldots , r+1 \right\} \setminus \{ b+2\}$), with variables in ${\mathbb C}^4$ given by $\xi_1, \eta_1, \xi_{b+2}, \eta_{b+2}$. 

Hence  $X_{a,b} \cap U= \overline{\left( X_{a,b} \cap U\right) \setminus Z} \subseteq U'$.
Then, since $X_{a,b}$ is irreducible of dimension $4$, and  $X_{a,b} \cap U$ is closed in $U'$ which is also irreducible of dimension $4$, we deduce that $X_{a,b} \cap U=U' \cong {\mathbb C}^4$ with coordinates $\xi_1, \eta_1, \xi_{b+2}, \eta_{b+2}$.

Now let us consider the two codimension 2 subspaces $\sigma_1, \sigma_2$ with equations
$$
\sigma_1) \quad x_0-x_1=x_{b+1}=0, \qquad \sigma_2) \quad x_0-x_1=x_{b+2}=0
$$
that intersect along the codimension 3 subspace $\pi$ with equations 
\begin{equation}\label{eq:pi}
\pi) \quad x_0-x_1=x_{b+1}=x_{b+2}=0
\end{equation}
and span the hyperplane $\sigma$ with equation  $x_0-x_1=0$. The set of points in the chart $U'$ corresponding to lines intersecting $\sigma_1$ has equation $\xi_1=1$, whereas 
the set of points in $U'$ corresponding to lines intersecting $\sigma_2$ has equation
$(\xi_1-1)\eta_{b+2}=\eta_1\xi_{b+2}$, so the intersection of the two sets has equations
$\xi_1=1, \eta_1\xi_{b+2}=0$, and this splits into two irreducible and reduced components with equations $\xi_1=1, \eta_1=0$ and $\xi_1=1, \xi_{b+2}=0$. The former equations define the set of points in $U'$ belonging to $X_{a,b}^\sigma$, the latter equations define the set of points in $U'$ belonging to $X_{a,b}^\pi$. 

In conclusion, we proved that for a specific choice of $\sigma_1$ and $\sigma_2$ in a hyperplane $\sigma$ and intersecting in a codimension 3 subspace $\pi$  we have that $X_{a,b}^\sigma\cup X_{a,b}^\pi$, with its reduced structure, is the scheme theoretical intersection of $X_{a,b}$ with the two hyperplane sections of lines intersecting $\sigma_1$ and $\sigma_2$. Then this is true for a general choice of $\sigma$, $\sigma_1$ and $\sigma_2$, and  the assertion follows. \end{proof}

To compute the degree of $X_{a,b}$ we have to compute the degrees of $X_{a,b}^\sigma$ and $X_{a,b}^\pi$. Before doing that, we need some preliminary results. 

Let $C_d\subset \pp^d$ be a rational normal curve of degree $d\geq 2$. Let $C_d(2)\cong \pp^2$ be the symmetric product of $C_d$. We have the \emph{secant morphism}
$$
\varphi_d: C_d(2)\cong \pp^2\longrightarrow \G(1,d)\subset \pp^{\frac {d(d+1)}2-1}
$$
 that maps a $\mathfrak y\in C_d(2)$ to the line $\ell_{\mathfrak y}=\langle \mathfrak y\rangle$. We note that $\varphi_d$ is injective by Lemma \ref{sec}. Let us denote by $V_d$ the image of $\varphi_d$. 

\begin{prop}\label{prop:rnc} $V_d$ is the \emph{$(d-1)$--Veronese surface}, i.e., the image of the plane via the complete linear system  $|\Oc_{\pp^2}(d-1)|$, and $\varphi_d$ is an isomorphism onto its image $V_d$.
\end{prop}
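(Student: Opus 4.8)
The plan is to write both $\varphi_d$ and the Veronese map in explicit coordinates and to check that they agree up to a linear automorphism of $\pp^{\frac{d(d+1)}2-1}$. First I would parametrize $C_d$ as $[s:t]\mapsto [s^d, s^{d-1}t,\ldots, t^d]$, so that $C_d(2)\cong (\pp^1)^{(2)}\cong \pp^2$, a point of $C_d(2)$ being an unordered pair $\{(s_1,t_1),(s_2,t_2)\}$. For a pair of distinct points $P=(s_1^{d-i}t_1^i)_{i=0}^d$ and $Q=(s_2^{d-i}t_2^i)_{i=0}^d$, the Pl\"ucker coordinates of $\ell_{\mathfrak y}=\langle P,Q\rangle$ are the minors $p_{ij}=s_1^{d-i}t_1^i s_2^{d-j}t_2^j-s_1^{d-j}t_1^j s_2^{d-i}t_2^i$ for $0\le i<j\le d$; there are exactly $\binom{d+1}{2}=\frac{d(d+1)}{2}$ of them, matching $\dim \pp^{\frac{d(d+1)}2-1}$.

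Each $p_{ij}$ is bihomogeneous of bidegree $(d,d)$ and antisymmetric under the exchange $(s_1,t_1)\leftrightarrow(s_2,t_2)$; in particular it vanishes along the diagonal $s_1t_2=s_2t_1$, hence is divisible by $\Delta:=s_1t_2-s_2t_1$. The next step is to set $M_{ij}:=p_{ij}/\Delta$, a symmetric bihomogeneous polynomial of bidegree $(d-1,d-1)$. Under the identification $(\pp^1)^{(2)}\cong\pp^2$ (with $\Oc_{\pp^2}(1)$ pulling back to the symmetric bundle $\Oc(1,1)$ on $\pp^1\times\pp^1$), the $M_{ij}$ descend to global sections of $\Oc_{\pp^2}(d-1)$. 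Since $\Delta$ is a common nonvanishing factor on the locus of distinct pairs, $[p_{ij}]=[M_{ij}]$ there, and the $M_{ij}$, which do not all vanish on the diagonal, provide the correct extension of $\varphi_d$ across the diagonal (the tangent lines). Thus $\varphi_d$ is globally given by $\mathfrak y\mapsto [M_{ij}(\mathfrak y)]_{i<j}$.

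The crux will be to show that the $M_{ij}$ form a basis of $H^0(\pp^2,\Oc_{\pp^2}(d-1))$, a space of dimension $\binom{d+1}{2}$, exactly the number of the $M_{ij}$; so it suffices to prove linear independence. If $\sum_{i<j}c_{ij}M_{ij}=0$, then multiplying by $\Delta$ gives $\sum_{i<j}c_{ij}p_{ij}=0$ as a polynomial in $s_1,t_1,s_2,t_2$. The monomial $s_1^{d-i}t_1^i s_2^{d-j}t_2^j$ with $i<j$ occurs in $c_{kl}p_{kl}$ only for $(k,l)=(i,j)$, with coefficient $c_{ij}$; since distinct monomials are linearly independent, every $c_{ij}=0$. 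Hence the $M_{ij}$ are a basis and span the complete linear system $|\Oc_{\pp^2}(d-1)|$.

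Consequently $\varphi_d$ is the composition of the $(d-1)$-Veronese embedding $\pp^2\hookrightarrow\pp^{\frac{d(d+1)}2-1}$ with a linear automorphism of $\pp^{\frac{d(d+1)}2-1}$; in particular it is an isomorphism onto its image, and $V_d$ is projectively equivalent to the $(d-1)$-Veronese surface. The main computational content, the linear independence, is immediate from the distinctness of the monomials involved, so the only point genuinely requiring care is the descent of the $M_{ij}$ to sections of $\Oc_{\pp^2}(d-1)$ and the verification that $[M_{ij}]$ matches $\varphi_d$ along the diagonal.
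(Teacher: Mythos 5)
Your proof is correct, but it takes a genuinely different route from the paper's. You make $\varphi_d$ completely explicit: the Pl\"ucker coordinates $p_{ij}$ of the secant line are antisymmetric of bidegree $(d,d)$, hence divisible by $\Delta=s_1t_2-s_2t_1$, and the quotients $M_{ij}=p_{ij}/\Delta$ are $\binom{d+1}{2}$ symmetric forms of bidegree $(d-1,d-1)$ that descend to sections of $\Oc_{\pp^2}(d-1)$; your monomial argument shows they are linearly independent, hence a basis, so $\varphi_d$ is the complete $(d-1)$--Veronese embedding up to a linear change of coordinates. The paper instead argues enumeratively: it computes $\deg(V_d)=(d-1)^2$ by intersecting with two Schubert hyperplanes and counting the divisors common to two $g^1_d$'s via the classical formula in ACGH, and it proves that $V_d$ spans $\pp^{\frac{d(d+1)}2-1}$ by induction on $d$, projecting $C_{d+1}$ from one of its points; combined with the injectivity of $\varphi_d$ (Lemma \ref{sec}), the degree forces the linear system to sit inside $|\Oc_{\pp^2}(d-1)|$ and the span forces it to be complete. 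Your approach is more elementary and self-contained --- no appeal to the enumerative formula and no induction --- and it identifies the linear system directly rather than by a degree-plus-nondegeneracy argument; the paper's approach has the virtue of reusing exactly the Schubert-calculus technique that drives the degree computations elsewhere in the section. Two small points you should make explicit: the identification of $[M_{ij}]$ with $\varphi_d$ on the diagonal follows because two morphisms from the reduced variety $\pp^2$ to projective space that agree on a dense open subset agree everywhere; and the fact that the $M_{ij}$ have no common zero is automatic once they are known to span the very ample complete system $|\Oc_{\pp^2}(d-1)|$.
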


\begin{proof} In \cite {AP} the result is proved in the case $d=4$, and the proof of the general case is similar. However we give it here for completeness. 

First of all we prove that $\deg(V_d)=(d-1)^2$. To see this, consider two general linear subspaces $\sigma_1, \sigma_2$ of codimension 2 in $\pp^d$, and intersect $V_d$ with the two hyperplane sections $H_1, H_2$ of $\G(1,d)$ of points corresponding to lines intersecting both $\sigma_1$ and $\sigma_2$. The hyperplanes through $\sigma_i$, for $i=1,2$, cut out on $C_d$ a linear series $g_i$, of dimension 1 and degree $d$. The number of intersection  points of $V_d$ with $H_1\cap H_2$ equals the number of divisors in $C_d(2)$ that are contained at the same time in divisors of $g_1$ and $g_2$. This number is well known to be $(d-1)^2$ (see \cite [p. 344]{ACGH}),  as wanted.

To finish the proof, we have to show that $V_d$ spans $\pp^{\frac {d(d+1)}2-1}$. We prove this by induction of $d$. The assertion is trivial for $d=2$. So, suppose we have proved it for $d$ and let us prove it for $d+1$. Let us consider the rational normal curve $C_{d+1}\subset \pp^{d+1}$ of degree $d+1$ and let $x\in C_{d+1}$ be a point. Let us project down $C_{d+1}$ from $x$, obtaining  a rational normal normal curve $C_d\subset \pp^d$. The projection from $x$ determines also an obvious rational map
$$
p: V_{d+1} \dasharrow V_d
$$
whose indeterminacy locus is a priori contained in the set of secant lines to $C_{d+1}$ that contain $x$, that is the set of lines of the ruling of the cone over $C_{d}$ with vertex $x$. This is a rational normal curve $D$ of degree $d$ on $V_{d+1}$.
Then $p$ is the projection of $V_{d+1}$ to $V_d$ from the subspace spanned by $D$, that has dimension $d$. By induction we know that $V_d$ spans a $\pp^{\frac {d(d+1)}2-1}$. Then $V_{d+1}$ spans a linear space of dimension
$$
\frac {d(d+1)}2-1+ d+1=\frac {d(d+3)}2
$$
as required. 
\end{proof}

\begin{cor}\label{cor:span} $X_{a,b}$ is linearly normal in $\pp^{\frac {r(r+3)}2}$.
\end{cor}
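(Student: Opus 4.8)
The plan is to realize $X_{a,b}$ as the image of $S_{a,b}[2]$ under a \emph{complete} linear system, i.e.\ to prove that the Pl\"ucker coordinates restrict to a basis of the space of sections of the hyperplane bundle. Write $L:=\Oc_{S_{a,b}}(1)$ and let $L^{[2]}$ be the tautological rank $2$ bundle on $S_{a,b}[2]$, whose fibre over $\mathfrak y$ is $H^0(L\otimes\Oc_{\mathfrak y})$. By the very definition of $\gamma_{a,b}$ the Pl\"ucker coordinates of $\ell_{\mathfrak y}=\langle \mathfrak y\rangle$ are the $2\times 2$ minors built from sections of $L$, so $\gamma_{a,b}^*\Oc_{X_{a,b}}(1)=\mathcal L:=\det L^{[2]}$ and the coordinate sections are the image of the natural map $\mu\colon \wedge^2 H^0(S_{a,b},L)\longrightarrow H^0(S_{a,b}[2],\mathcal L)$. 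Since $S_{a,b}$ is linearly normal in $\pp^{r+1}$ we have $h^0(S_{a,b},L)=r+2$, hence $\dim\big(\wedge^2 H^0(S_{a,b},L)\big)=\binom{r+2}2=\frac{r(r+3)}2+1$, exactly the number of Pl\"ucker coordinates. Thus $X_{a,b}$ is linearly normal precisely when $\mu$ is an isomorphism, and it suffices to prove: (a) $\mu$ is injective, equivalently $X_{a,b}$ is nondegenerate; and (b) $h^0(S_{a,b}[2],\mathcal L)=\binom{r+2}2$.

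First I would dispose of (a) directly on $S_{a,b}$. A nonzero element of $\ker\mu$ is a nonzero alternating form $\omega$ on $\C^{r+2}$ whose vanishing cuts a hyperplane of $\pp^{\frac{r(r+3)}2}$ containing $X_{a,b}$; concretely $\omega(v,w)=0$ for every pair of points $[v],[w]$ of $S_{a,b}$, honest secants being dense among all secant and tangent lines. Fixing a general $[v]\in S_{a,b}$, the linear form $\omega(v,-)$ vanishes identically on $S_{a,b}$, and since $S_{a,b}$ spans $\pp^{r+1}$ this forces $\omega(v,-)=0$; letting $[v]$ vary over the nondegenerate $S_{a,b}$ yields $\omega=0$, a contradiction. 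Hence $\mu$ is injective and $X_{a,b}$ is nondegenerate. This step uses nothing beyond the nondegeneracy of $S_{a,b}$, and it is the exact analogue, one dimension up, of the spanning statement for the rational normal curve established in Proposition \ref{prop:rnc}; indeed the surface $V_r$ of that proposition sits inside $X_{a,b}$ as the section $X^{\sigma}_{a,b}$ and already spans a linear subspace of codimension $r+1$.

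The hard part will be (b): one must show that $\mathcal L=\det L^{[2]}$ carries no sections beyond those coming from $\wedge^2 H^0(S_{a,b},L)$. I would prove this on the double cover $\rho\colon \wt{S_{a,b}\times S_{a,b}}\to S_{a,b}[2]$, where $\wt{S_{a,b}\times S_{a,b}}$ is the blow--up of $S_{a,b}\times S_{a,b}$ along the diagonal $\Delta$ and $S_{a,b}[2]$ is its quotient by the involution exchanging the two factors. Pulling back, $\rho^*\mathcal L$ is the pull--back of $L\boxtimes L$ twisted down by the exceptional divisor of the blow--up, and $H^0(S_{a,b}[2],\mathcal L)$ is the $(-1)$--eigenspace of the involution on the sections of this bundle. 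The computation then reduces to the statement that the sections of $L\boxtimes L$ on $S_{a,b}\times S_{a,b}$ vanishing along $\Delta$ are exactly the alternating tensors inside $H^0(S_{a,b},L)\otimes H^0(S_{a,b},L)$, and that no new sections are created along the exceptional divisor. Both points are governed by the special geometry of the scroll, namely that $L$ is very ample, that $S_{a,b}$ is projectively normal, and that its ideal is generated by quadrics (Lemma \ref{sec}); this is where the argument really uses that we are dealing with $S_{a,b}$ and not an arbitrary surface, and I expect it to be the main obstacle. Combining (a) and (b), $\mu$ is an isomorphism and $X_{a,b}$ is linearly normal.
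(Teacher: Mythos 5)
Your strategy is sound and in fact more scrupulous than the paper about what ``linearly normal'' demands, but it takes a genuinely different route. The paper's proof is one line: $S_{a,b}$ carries rational normal curves $C_{r+1}$ of degree $r+1$, so $X_{a,b}$ contains the surface $V_{r+1}$, which by Proposition \ref{prop:rnc} already spans all of $\pp^{\frac {r(r+3)}2}$; nondegeneracy follows, and the completeness of the hyperplane system is left implicit. Your step (a) proves the same nondegeneracy by a direct bilinear--algebra argument (the form $\omega(v,-)$ vanishes on the nondegenerate $S_{a,b}$, hence is zero, and then $\omega=0$); this is correct, complete and more elementary than routing through the inductive Proposition \ref{prop:rnc}, though the paper gets that proposition for free since it is needed anyway for the degree computation. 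You are also right that linear normality in the strict sense requires the further input (b) that $h^0(S_{a,b}[2],\det L^{[2]})=\binom{r+2}2$, a point the paper's proof does not address.

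On (b), which you leave as a sketch and flag as the main obstacle, two corrections. First, the reduction you state --- that the sections of $L\boxtimes L$ vanishing along $\Delta$ are exactly the alternating tensors --- is false: that space is the kernel of the multiplication map $H^0(L)\otimes H^0(L)\to H^0(L^{2})$, whose symmetric part is the $\binom r2$--dimensional space of quadrics through $S_{a,b}$, nonzero for $r\geq 2$. What you need is only the anti--invariant part of $H^0(L)^{\otimes 2}$, and that is all of $\wedge^2H^0(L)$ trivially, by K\"unneth. Second, and consequently, (b) uses none of the special geometry of the scroll: writing $q\colon Y\to S[2]$ for the double cover from the blow--up of $S\times S$ along $\Delta$, one has $q_*\Oc_Y=\Oc\oplus\Oc(-\tfrac B2)$, so the projection formula gives $H^0(L)^{\otimes 2}=H^0(Y,q^*L[2])=H^0(S[2],L[2])\oplus H^0(S[2],L[2]-\tfrac B2)$; since $H^0(S[2],L[2])=\mathrm{Sym}^2H^0(L)$, the second summand, which is $H^0(S[2],\det L^{[2]})$, equals $\wedge^2H^0(S,L)$ for any line bundle on any smooth surface. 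With this soft fact in place your map $\mu$ is an isomorphism between spaces of dimension $\binom{r+2}2$, and the corollary follows; your argument is then complete and actually establishes more than the paper's does explicitly.
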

\begin{proof} On $S_{a,b}$ there are rational normal curves $C_{r+1}$ of degree $r+1$, and therefore $V_{r+1}$ is contained in $X_{a,b}$. Since, by Proposition \ref {prop:rnc}, $V_{r+1}$ spans a linear space of dimension $\frac {r(r+3)}2$, we are done. 
\end{proof}

Next we want to figure out how to compute $\deg (X_{a,b}^\pi)$, where $\pi$ is a general linear subspace of codimension 3. To do this we choose a codimension 2 linear subspace $\tau$ that intersects $\pi$ along a linear subspace $\alpha$ of codimension 4, so that $\sigma=\langle \tau, \pi\rangle$ is a hyperplane. Then we intersect $X_{a,b}^\pi$ with the hyperplane section of all lines intersecting $\tau$, thus obtaining a curve $X_{a,b}^{\pi,\tau}$. Then  $X_{a,b}^{\pi,\tau}$ splits in two parts:\\ \begin{inparaenum}
\item [$\bullet$] the curve $X_{a,b}^{\alpha}$ of lines in $X_{a,b}$  intersecting $\alpha$;\\
\item  [$\bullet$] the curve $X_{a,b}^{\sigma,\pi}$ of lines  in $X_{a,b}$ contained in $\sigma$ and intersecting $\pi$. 
\end{inparaenum}

\begin{lem}\label{lem:int2} One has
$$
\deg (X_{a,b}^\pi)=\deg (X_{a,b}^{\alpha})+\deg (X_{a,b}^{\sigma,\pi}).
$$
\end{lem}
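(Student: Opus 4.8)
The plan is to argue exactly as in the proof of Lemma~\ref{lem:int}. By construction $X_{a,b}^{\pi,\tau}$ is the section of the surface $X_{a,b}^\pi$ by the Schubert divisor of lines meeting $\tau$, which is a Pl\"ucker hyperplane section of $\G(1,r+1)$; since $X_{a,b}^\pi$ is irreducible of dimension $2$ (as established inside Lemma~\ref{lem:int}) and, for general $\tau$, is not contained in that divisor, the section $X_{a,b}^{\pi,\tau}$ is a curve with $\deg(X_{a,b}^{\pi,\tau})=\deg(X_{a,b}^\pi)$. Thus the lemma will follow once I show that, as a one--dimensional scheme, $X_{a,b}^{\pi,\tau}$ is the union $X_{a,b}^\alpha\cup X_{a,b}^{\sigma,\pi}$, that these two curves share no common component, and that each occurs with multiplicity one; for then the degree is additive over the two reduced components.

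First I would record the set--theoretic decomposition by elementary projective geometry. Let $\ell\in X_{a,b}$ meet $\pi$ at $p$ and $\tau$ at $q$; then $p,q\in\sigma=\langle\tau,\pi\rangle$. If $\ell\not\subset\sigma$ then $\ell\cap\sigma$ is a single point, forcing $p=q\in\pi\cap\tau=\alpha$, so $\ell\in X_{a,b}^\alpha$; if instead $\ell\subset\sigma$ then $\ell\in X_{a,b}^{\sigma,\pi}$. Conversely, a line through a point of $\alpha\subset\pi\cap\tau$ meets both $\pi$ and $\tau$, while a line of $X_{a,b}^{\sigma,\pi}$, being contained in $\sigma$, automatically meets the hyperplane $\tau$ of $\sigma$ besides $\pi$; hence $X_{a,b}^{\pi,\tau}=X_{a,b}^\alpha\cup X_{a,b}^{\sigma,\pi}$. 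The two curves have no common component, since a general line of $X_{a,b}^{\sigma,\pi}$ meets $\pi$ off the hyperplane $\alpha\subset\pi$, whereas a general line of $X_{a,b}^\alpha$ is not contained in $\sigma$; their irreducibility, should one want it, follows from the projection and monodromy argument used for $X_{a,b}^\pi$ in Lemma~\ref{lem:int}.

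The remaining and principal point is that the section cuts $X_{a,b}^\pi$ with multiplicity one along each of the two curves; granting this, the fundamental cycle of $X_{a,b}^{\pi,\tau}$ is $X_{a,b}^\alpha+X_{a,b}^{\sigma,\pi}$ and its degree is the sum of the two degrees. To check multiplicity one I would verify that at a general point of each component $X_{a,b}^\pi$ is smooth and the Schubert divisor of lines meeting $\tau$ is transverse to it. Concretely this is a computation in the chart $U'\cong\C^4$ of Lemma~\ref{lem:int}, in which $X_{a,b}^\pi$ is given by $\xi_1=1,\ \xi_{b+2}=0$: for a suitable $\tau\subset\sigma$ with $\tau\cap\pi=\alpha$, the single equation expressing that $\ell$ meets $\tau$ restricts on $X_{a,b}^\pi$ to a product of two distinct reduced factors, one defining $X_{a,b}^\alpha$ and the other $X_{a,b}^{\sigma,\pi}$, exactly as the equation $\eta_1\xi_{b+2}=0$ arose there. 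This reducedness is the real obstacle; everything else is formal, and once it is in hand the additivity $\deg(X_{a,b}^\pi)=\deg(X_{a,b}^\alpha)+\deg(X_{a,b}^{\sigma,\pi})$ follows, in agreement with the Pieri relation $[\Si_1]\cdot[\Si_2]=[\Si_3]+[\Si_{2,1}]$ in the Chow ring of $\G(1,r+1)$, where $[\Si_3]$ and $[\Si_{2,1}]$ restrict on $X_{a,b}$ to the classes of $X_{a,b}^\alpha$ and $X_{a,b}^{\sigma,\pi}$ respectively.
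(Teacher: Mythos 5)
Your proposal is correct and follows essentially the same route as the paper: reduce the lemma to showing that the Schubert hyperplane of lines meeting $\tau$ cuts $X_{a,b}^\pi$ in the reduced union $X_{a,b}^{\alpha}\cup X_{a,b}^{\sigma,\pi}$, establish irreducibility of the two components by monodromy/projection arguments, and verify reducedness by the explicit computation in the chart $U'$ of Lemma~\ref{lem:int}, where the restricted equation factors into two distinct linear factors (in the paper, $\xi_1\eta_{b+3}=0$ with $\eta_{b+3}=\xi_{b+2}$). The only difference is that you defer that chart computation rather than carrying it out, but you correctly identify it as the crux and describe it accurately.
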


\begin{proof} As in Lemma  \ref {lem:int}, the assertion will follow once we prove that $X_{a,b}^{\alpha} \cup X_{a,b}^{\sigma,\pi}$, with its reduced structure, is the scheme theoretical intersection of $X_{a,b}^\pi$ with the hyperplane section of the lines intersecting $\tau$. 

We first show that $X_{a,b}^{\alpha}$ and $X_{a,b}^{\sigma,\pi}$ are both irreducible for general choices of $\alpha, \pi, \sigma$.

For $X_{a,b}^{\sigma,\pi}$, we notice that it consists of the secants to the rational normal curve $C_r$ intersection of $S_{a,b}$ with $\sigma$, that intersect $\pi$, that has codimension $2$ in $\sigma$. The hyperplanes in $\sigma$ containing $\pi$ cut out a general $g^1_r$ on $C_r$. By generality the monodromy of this $g^1_r$ is the full symmetric group. $X_{a,b}^{\sigma,\pi}$ can be identified with the effective divisors of degree $2$ on $C_r$ that are contained in divisors of the  $g^1_r$. Since the full symmetric group is $2-$transitive, the required irreducibility follows.

As for $X_{a,b}^{\alpha}$ let us consider the projection $f \colon S_{a,b} \rightarrow \pp^{3}$ with center $\alpha$. The image $\Sigma$ of this projection is a surface with ordinary singularities with an irreducible double curve (\cite{F1, F2, MP}). 
$ X_{a,b}^{\alpha}$ can be identified with this double curve, so we have irreducibility. 

Next we go back to the local computation we made in the proof of Lemma \ref {lem:int}, from which we keep the notation. 

We fix $\pi$ and $\tau$ to have equations
$$
\pi)\qquad x_1-x_{b+1}=x_0=x_{b+2}=0, \qquad \tau)\qquad x_1-x_{b+1}=x_{b+3}=0
$$
so that
$$
\sigma)\qquad x_1-x_{b+1}=0, \qquad \alpha)\qquad x_1-x_{b+1}=x_0=x_{b+2}=x_{b+3}=0.
$$
Then $X_{a,b}^\pi$ has equations $\eta_1=1, \eta_{b+2}=0$ and the hyperplane section given by the lines intersecting $\tau$ has equation $\xi_1\eta_{b+3}=\xi_{b+3}(\eta_1-1)$. So the intersection of this hyperplane with $X_{a,b}^\pi$ has equations
$$
\eta_1=1,\quad  \eta_{b+2}=0,\quad  \xi_1\eta_{b+3}=0.
$$
This intersection splits in two parts, one with equations
$$
\eta_1=1,\quad  \eta_{b+2}=0,\quad  \xi_1=0,
$$
that is irreducible and reduced and coincides with  $X_{a,b}^{\sigma,\pi}$, and the other with equations
\begin {equation}\label{ep:lko}
\eta_1=1,\quad  \eta_{b+2}=0,\quad  \eta_{b+3}=0.
\end{equation}
Now, by \eqref {eq:plop} we see that $\eta_{b+3}=\eta_1\xi_{b+2}+\eta_{b+2}^2=\xi_{b+2}$, so that \eqref {ep:lko} is equivalent to
$$
\eta_1=1,\quad \eta_{b+2}=0,\quad  \xi_{b+2}=0
$$
that defines an irreducible and reduced curve, that coincides with $X_{a,b}^{\alpha}$. \end{proof}

We can now prove that:

\begin{thm}\label{thm:deg} One has $\deg (X_{a,b})= 3r^2-8r+6$. 
\end{thm}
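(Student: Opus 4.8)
The plan is to combine the two reduction lemmas with the Veronese computation of Proposition \ref{prop:rnc}. By Lemma \ref{lem:int} and Lemma \ref{lem:int2} one has
$$
\deg(X_{a,b}) = \deg(X_{a,b}^\sigma) + \deg(X_{a,b}^\pi) = \deg(X_{a,b}^\sigma) + \deg(X_{a,b}^\alpha) + \deg(X_{a,b}^{\sigma,\pi}),
$$
so it suffices to evaluate the three degrees on the right. I expect the first and third to be immediate from Proposition \ref{prop:rnc}, while $\deg(X_{a,b}^\alpha)$ will be the genuine obstacle.

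For the first term, $X_{a,b}^\sigma$ is the secant variety of the rational normal curve $C_r = S_{a,b}\cap\sigma$ of degree $r$ in $\sigma\cong\pp^r$; by Proposition \ref{prop:rnc} this is the Veronese surface $V_r$, so $\deg(X_{a,b}^\sigma) = (r-1)^2$. For the third term, $X_{a,b}^{\sigma,\pi}$ is the set of secants of the same $C_r$ that meet $\pi$; since the lines of $\sigma\cong\pp^r$ meeting the codimension $2$ space $\pi\subset\sigma$ form a Schubert hyperplane of $\G(1,r)$, this curve is a hyperplane section of $V_r$, whence $\deg(X_{a,b}^{\sigma,\pi}) = \deg(V_r) = (r-1)^2$.

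The crux is $\deg(X_{a,b}^\alpha)$. Its Pl\"ucker degree is the degree of the scroll $R\subset\pp^{r+1}$ swept out by the secant lines through $\alpha$; equivalently, identifying $X_{a,b}^\alpha$ with the double curve $D$ of the projected scroll $\Sigma\subset\pp^3$, it is the self-intersection of $\Oc(1)$ on the ruled surface $R\to D$. I would first compute the class of the preimage double curve $\widetilde D\subset S_{a,b}$ by the double-point formula, obtaining the virtual class $(r-4)H-K_{S_{a,b}}$ with $H=\Oc(1)=E+bF$, and then read off $\deg R$ from the ruled structure $\widetilde D\to D$ (a double cover of the rational curve $D$), arriving at $\deg(X_{a,b}^\alpha) = (r-2)^2$. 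The delicate point here, and the main obstacle, is that $S_{a,b}$ is a scroll: the finitely many rulings meeting $\alpha$ are contracted by $\gamma_{a,b}$ and produce spurious $2$-dimensional contributions to the naive secant locus, while $\widetilde D$ acquires singularities at the triple and pinch points; so the excess along the rulings and the genuine geometric genus of the double curve must be tracked carefully in order to isolate the honest Pl\"ucker degree (a careless count via the double-point class alone overshoots the correct value).

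Granting these three values, the theorem follows by summing:
$$
\deg(X_{a,b}) = (r-1)^2 + (r-2)^2 + (r-1)^2 = 2(r-1)^2 + (r-2)^2 = 3r^2 - 8r + 6.
$$
As an independent check that also sidesteps the scroll subtleties, one can compute the degree directly on the smooth fourfold $S_{a,b}[2]$: the secant map is given by the tautological line bundle $\mathcal L = H[2] - \tfrac{B}{2}$, it is birational onto $X_{a,b}$ by Lemma \ref{sec}, and evaluating $\mathcal L^4$ by pulling back to the blow-up of $S_{a,b}\times S_{a,b}$ along the diagonal and using $K_{S_{a,b}}^2=8$ and $e(S_{a,b})=4$ reproduces $3r^2 - 8r + 6$.
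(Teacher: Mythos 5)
Your skeleton is the paper's own: the same two reduction lemmas give $\deg(X_{a,b})=\deg(X_{a,b}^\sigma)+\deg(X_{a,b}^{\sigma,\pi})+\deg(X_{a,b}^{\alpha})$, and your evaluations of the first two terms as $(r-1)^2$ via Proposition \ref{prop:rnc} are exactly what the paper does. The one place where your write-up is not yet a proof is the term you correctly single out as the crux, $\deg(X_{a,b}^{\alpha})=(r-2)^2$: you propose to extract it from the double-point class $(r-4)H-K_{S_{a,b}}$ of $\widetilde D$ and ``the ruled structure,'' but you then concede that the excess along the contracted rulings and the singularities of $\widetilde D$ ``must be tracked carefully'' without doing so --- and knowing the class of $\widetilde D$ (equivalently $\deg\widetilde D=(r-1)(r-2)$) does not by itself determine the degree of the scroll $\Phi$ swept out by the lines. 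The paper closes this step differently and more cleanly: it cuts $\Phi$ with a hyperplane $H\supseteq\alpha$, which decomposes into the curve $C=\mathrm{Sec}(S_{a,b})\cap\alpha$, of degree $\binom{r-2}{2}$ by the double-point formula for a general projection of $S_{a,b}$ to $\pp^4$, together with $\delta=\binom{r-1}{2}$ lines of the ruling of $\Phi$ (one for each point of the double curve of $\Sigma\subset\pp^3$ lying in the image plane of $H$), giving $\deg\Phi=\binom{r-1}{2}+\binom{r-2}{2}=(r-2)^2$. If you want to keep your route, you would need an analogous auxiliary curve (the section $C$ of $\Phi\to X_{a,b}^{\alpha}$ cut by $\alpha$) in addition to the bisection $\widetilde D$; the bisection alone underdetermines $\deg\Phi$.

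Your proposed independent check is a genuinely different argument: since $\gamma_{a,b}^*\Oc_{\G}(1)=H[2]-\frac B2$ and $\gamma_{a,b}$ is birational onto $X_{a,b}$ by Lemma \ref{sec} (its positive-dimensional fibres lie only over the lines contained in $S_{a,b}$), the degree is the self-intersection $(H[2]-\frac B2)^4$ on $S_{a,b}[2]$. This is essentially the computation of \cite{Cattaneo}, which the paper cites but cannot invoke directly because $\Oc(1)$ on a scroll is not $3$-very ample; replacing $3$-very ampleness by the birationality statement of Lemma \ref{sec} is exactly the right fix, and if you actually evaluate the intersection number (using $H^2=r$, $H\cdot K=-(r+2)$, $K^2=8$, $c_2=4$) this would give a complete, self-contained alternative proof. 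As written, however, neither your main computation of $\deg(X_{a,b}^{\alpha})$ nor the check is carried to the end.
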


\begin{proof} We assume $r\geq 4$, the case $r=3$ is similar, and in fact easier, and can be left to the reader. The case $r=2$ is trivial.

We apply Lemmas \ref {lem:int} and \ref {lem:int2}. First of all we compute $\deg (X_{a,b}^\sigma)$. We note that $X_{a,b}^\sigma$ is nothing but the surface described by the secant lines to a general hyperplane section of $S_{a,b}$ that is a rational normal curve of degree $r$. Hence, by Proposition \ref {prop:rnc}, we have $\deg (X_{a,b}^\sigma)=(r-1)^2$.

Next we compute $\deg (X_{a,b}^{\sigma,\pi})$. This coincides again with the degree of the surface described by the secant lines to a general hyperplane section of $S_{a,b}$. So $\deg (X_{a,b}^{\sigma,\pi})=(r-1)^2$.

Finally, we compute $\deg(X_{a,b}^{\alpha})$. We consider the surface scroll $\Phi\subset \pp^{r+1}$ described by the lines in $X_{a,b}^{\alpha}$.  We claim that 
$$
\deg(X_{a,b}^{\alpha})=\deg(\Phi).
$$
Indeed, $\deg(X_{a,b}^{\alpha})$ equals the number of points in $X_{a,b}^{\alpha}$ corresponding to lines intersecting a general linear subspace $\tau$ of codimension 2, and this is exactly the number of points that $\tau$ has in common with $\Phi$. So we need to compute $\deg(\Phi)$.

The linear subspace $\alpha$ of dimension $r-3$ intersects $\Phi$ along a curve $C$. The curve $C$ can also be interpreted as the scheme theoretical intersection of the secant variety ${\rm Sec}(S_{a,b})$ of $S_{a,b}$ with $\alpha$. 
By the generality of $\alpha$, $C$ is irreducible and reduced, so  $\deg (C)=\deg ({\rm Sec}(S_{a,b}))$.
One has $\deg ({\rm Sec}(S_{a,b}))={{r-2}\choose 2}$. Indeed, if $\beta$ is a general codimension 5 linear space, the number of its intersection points with   ${\rm Sec}(S_{a,b})$ is $\deg ({\rm Sec}(S_{a,b}))$. On the other hand this number is also the number of double points of the  projection of $S_{a,b}$ from $\beta$ to $\pp^4$, that is a general proiection of $S_{a,b}$ to $\pp^4$, and this number is ${{r-2}\choose 2}$ by the double point formula. Thus we have
$$
\deg (C)={{r-2}\choose 2}.
$$
Now take a general hyperplane containing $\alpha$. This intersects $\Phi$ along $C$ and along a certain number $\delta$ of lines of the ruling and
$$
\deg(\Phi)=\delta+\deg (C)=\delta+{{r-2}\choose 2}.
$$
To compute $\delta$, consider the projection from $\alpha$ to $\pp^3$. Then $S_{a,b}$ is projected to a rational scroll $\Sigma$ of degree $r$ in $\pp^3$ with ordinary singularities and  $\delta$ coincides with the degree of the double curve  of $\Sigma$, so that $\delta={{r-1}\choose 2}$. Summing up
$$
\deg(X_{a,b}^{\alpha})=\deg(\Phi)={{r-1}\choose 2}+{{r-2}\choose 2}=(r-2)^2.
$$

In conclusion, putting all the above information together, we get
$$
\deg (X_{a,b})=2(r-1)^2+(r-2)^2
$$
and the assertion follows. \end{proof}

We note that the formula of the Theorem \ref{thm:deg} agrees with the computation in \cite{Cattaneo} of the degree of the image of the secant map for a general surface. However the hypotheses in \cite{Cattaneo} do not apply to our situation.

\subsection{Local structure of \texorpdfstring{$X_{a,b}$}{X{a,b}}} Now we want to study smoothness or singularity of the points of $X_{a,b}$. 

The case $a=b=1$ is trivial since, in this case $X_{1,1}=\mathbb G(1,3)$. So in the rest of this section we will assume $(a,b)\neq (1,1)$ so that $r=a+b>2$.

A preliminary remark is in order. The only lines in $S_{a,b}$ are the ones of the ruling, except for $S_{1,r-1}\cong \F_{r-2}$, in which there is a further line $s$ image of the negative section $E$ of $\F_{r-2}$. This line, in turn, is mapped by the secant map to a point $p_s$ of $X_{1,r-1}$. 

\begin{thm}\label{thm:sing} If $(a,b)\neq (1,1)$, then $X_{a,b}$ is smooth, except for $X_{1,r-1}$, in which case  it has the unique singular point at $p_s$, whose tangent cone is the cone over a smooth threefold linear section of the Segre embedding of  $\pp^2\times \pp^{r-2}$ in $\pp^{3r-4}$, hence it has multiplicity $r\choose 2$. \end{thm}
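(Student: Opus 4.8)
The plan is to separate the two assertions---smoothness away from $p_s$, and the local structure at $p_s$---and to exploit that, by Lemma \ref{sec}, the only positive--dimensional fibres of $\gamma_{a,b}$ come from length $2$ subschemes lying on a line of $S_{a,b}$. The lines of $S_{a,b}$ are the rulings (always) together with the extra line $s$ when $a=1$. Hence $\gamma_{a,b}$ contracts the pairs on a ruling (the ray generated by $F(2)$) and, for $a=1$, also the pairs on $s$ (the surface $\Ec_{r-2}\cong\pp^2$), so it realises $\phi_{b-a,1}$ if $a\ge2$ and $\psi_{b-1}$ if $a=1$. Off these loci $\gamma_{a,b}$ is injective, and I would show it is an immersion there: for a reduced $\mathfrak y=\{P_1,P_2\}$ with $\ell_{\mathfrak y}\not\subset S_{a,b}$ one has $\ell_{\mathfrak y}\cap S_{a,b}=\mathfrak y$ by Lemma \ref{sec}, the bundle $N_{\ell_{\mathfrak y}/\pp^{r+1}}\cong\Oc_{\pp^1}(1)^{\oplus r}$ is evaluated isomorphically at the two points, and $d\gamma_{a,b}$ is injective precisely because $\ell_{\mathfrak y}$ is not tangent to $S_{a,b}$ at $P_1$ or $P_2$ (the tangent and curvilinear cases go the same way). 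Along $\Gamma$ smoothness is already in hand: the chart computation in the proof of Lemma \ref{lem:int} shows $X_{a,b}\cap U\cong\C^4$ at a ruling line, and the $\mathrm{PGL}_2$ acting on the base makes all points of $\Gamma$ equivalent. This settles smoothness of $X_{a,b}$ for $a\ge2$ and of $X_{1,r-1}$ away from $p_s$.

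For the singular point $p_s=[s]$ of $X_{1,r-1}$ I would compute the tangent cone by a first--order analysis in $\G(1,r+1)$. Write $U$ for the $2$--dimensional space with $s=\pp(U)$ and $W=\C^{r+2}/U$, so that $T_{[s]}\G(1,r+1)=\Hom(U,W)$. The projection of $S_{1,r-1}$ from $s$ is exactly the rational normal curve $C_{r-1}\subset\pp(W)$, whence $W\cong\mathrm{Sym}^{r-1}U$ and $C_{r-1}=\{[\rho^{\,r-1}]:\rho\in U\}$. A point of $S_{1,r-1}$ lying over $R=[\rho]\in s$ leaves $s$, to first order, in the normal direction $\langle\rho^{\,r-1}\rangle$; therefore a secant line through two points approaching $s$ over $R_1,R_2$ has limiting direction a homomorphism $\phi$ with $\phi(\rho_i)\in\langle\rho_i^{\,r-1}\rangle$. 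The upshot, which I would confirm by the same local coordinates used in Lemma \ref{lem:int}, is
\begin{equation*}
C_{p_s}X_{1,r-1}=\overline{\{\phi\in\Hom(U,W)\ :\ \phi(\rho)\in\langle\rho^{\,r-1}\rangle\ \text{for at least two}\ [\rho]\in\pp(U)\}}.
\end{equation*}

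Finally I would identify this cone with the promised Segre section using $\mathrm{SL}(U)$--representation theory. By Clebsch--Gordan $\Hom(U,W)=U^\ast\otimes\mathrm{Sym}^{r-1}U\cong\mathrm{Sym}^{r}U\oplus\mathrm{Sym}^{r-2}U$, and this sits as the linear subspace $\Lambda$ of codimension $r-3$ inside $\mathrm{Sym}^2U\otimes\mathrm{Sym}^{r-2}U\cong\mathrm{Sym}^{r}U\oplus\mathrm{Sym}^{r-2}U\oplus\mathrm{Sym}^{r-4}U$ (the complement of the $\mathrm{Sym}^{r-4}U$ summand). A decomposable tensor $\alpha\otimes\beta$ lies on $\Lambda\cap(\pp^2\times\pp^{r-2})$ exactly when its second transvectant $(\alpha,\beta)_2$ (its $\mathrm{Sym}^{r-4}U$--component) vanishes; writing $\alpha=\rho_1\rho_2$ recovers the two points $R_1,R_2$, and the vanishing of $(\alpha,\beta)_2$ translates into $\phi(\rho_i)\in\langle\rho_i^{\,r-1}\rangle$, giving $\pp(C_{p_s}X_{1,r-1})=\Lambda\cap(\pp^2\times\pp^{r-2})$. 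Projecting to the first factor $\pp(\mathrm{Sym}^2U)=\pp^2$ exhibits this threefold as the $\pp^1$--bundle with fibre $\pp\big(\Ker(\beta\mapsto(\alpha,\beta)_2)\big)$; since $\beta\mapsto(\alpha,\beta)_2$ is surjective for every $\alpha\neq0$ (including $\alpha=\rho^2$, where it is, up to scalar, a second directional derivative), the section is smooth, and being a proper codimension $r-3$ linear section of a variety of degree $\binom r2$ it has degree $\binom r2$; hence $\mathrm{mult}_{p_s}X_{1,r-1}=\binom r2$. The main obstacle is exactly this last identification: matching the intrinsic $2$--secant description of the tangent cone with the transvectant equation of the section, and checking that the specific (non-generic) $\mathrm{SL}(U)$--invariant $\Lambda$ still meets the Segre variety properly, so that the section stays smooth of the expected degree.
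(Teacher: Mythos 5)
Your route is genuinely different from the paper's on both halves of the statement, and in outline it could work, but as written it has three concrete gaps. For context: the paper proves smoothness at a point $p_\ell$ with $\ell\not\subset S_{a,b}$ by cutting $X_{a,b}$ with two hyperplanes so that $p_\ell$ lies on $X_{a,b}^\sigma\setminus X_{a,b}^\pi$, where $X_{a,b}^\sigma$ is a Veronese surface by Proposition \ref{prop:rnc}; smoothness of a two--codimensional linear section at a point forces smoothness of the fourfold there, and this handles honest secants and tangent lines uniformly. At $p_s$ the paper writes explicit local equations in a Grassmannian chart centered at $s$ and finds that they are the $2\times 2$ minors of a $3\times(r-1)$ matrix of \emph{linear} forms in the chart coordinates; since these equations are homogeneous, the variety locally coincides with its tangent cone, and the Segre description, the scheme structure, and the multiplicity all come out at once.

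The first gap is in your immersion argument. The computation $\Ker d\gamma_{a,b}=\bigl(T_{P_1}S\cap T_{P_1}\ell\bigr)\oplus\bigl(T_{P_2}S\cap T_{P_2}\ell\bigr)$ that you use for reduced $\mathfrak y$ degenerates exactly in the curvilinear case, where $\ell_{\mathfrak y}$ \emph{is} tangent to $S_{a,b}$ at the support point; saying the tangent case ``goes the same way'' skips the only delicate instance, and a separate local computation on the Hilbert scheme at a non--reduced scheme (or the paper's Veronese trick, which covers tangent lines for free) is needed. You also need to record that $\gamma_{a,b}$, restricted over the complement of $\Gamma\cup\{p_s\}$, is proper, so that ``injective immersion'' upgrades to ``isomorphism onto the image''.

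The second and more serious gap concerns the tangent cone. Your description of $C_{p_s}X_{1,r-1}$ as the closure of the limits of secant directions over two \emph{distinct} points of $s$ gives a priori only a $4$--dimensional closed subset of the tangent cone; equality requires knowing the tangent cone is irreducible (or bounding its dimension), which you do not establish independently. More importantly, this identifies only the \emph{reduced} tangent cone, whereas $\mathrm{mult}_{p_s}X_{1,r-1}=\deg C_{p_s}X_{1,r-1}$ as a scheme: without the local equations you get only the lower bound $\binom{r}{2}\leq\mathrm{mult}_{p_s}X_{1,r-1}$, and reducedness of the tangent cone is precisely what your limit argument cannot see. Finally, the transvectant dictionary you flag as the main obstacle really is one: with the natural equivariant surjection $\mathrm{Sym}^2U\otimes\mathrm{Sym}^{r-2}U\to U\otimes\mathrm{Sym}^{r-1}U$ (symmetrized multiplication), the image of $\rho_1\rho_2\otimes\beta$ has $\phi(\rho_1)$ proportional to $\rho_1\beta$, which lies in $\langle\rho_1^{\,r-1}\rangle$ only if $\beta\in\langle\rho_1^{\,r-2}\rangle$, so the stated equivalence between $(\alpha,\beta)_2=0$ and $\phi(\rho_i)\in\langle\rho_i^{\,r-1}\rangle$ does not hold for the obvious identification and the correct twist must be exhibited. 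All of these issues evaporate if, as in the paper, one simply writes the chart at $[s]$ and observes that the secancy conditions are the minors of a matrix of linear forms.
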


\begin{proof}  In the proof of Lemma \ref {lem:int} we have proved that given a line of the ruling of $S_{a,b}$, then $X_{a,b}$ is smooth at the point corresponding to that line. 

Let now $\ell$ be a line, not contained in $S_{a,b}$, that is secant (or tangent) to $S_{a,b}$. We want to prove that $X_{a,b}$ is smooth at the point $p_\ell$ corresponding to $\ell$. To see this, we  take the intersection of $X_{a,b}$ with two suitable hyperplanes containing $p_{\ell}$ obtaining a surface and we will show that this surface is smooth at $p_\ell$.

We argue as in the proof of Lemma \ref {lem:int}. Consider a general codimension 3 linear subspace $\pi$ and two distinct codimension 2 linear subspaces $\sigma_1$ and $\sigma_2$ containing $\pi$ and intersecting in one point each, and in different points, $\ell$, and therefore spanning a hyperplane $\sigma$ containing $\ell$. As we saw in Lemma \ref {lem:int} (from which we keep the notation), the intersection of $X_{a,b}$ with the two hyperplanes of lines intersecting $\sigma_1$ and $\sigma_2$ is the reduced union $X_{a,b}^\sigma\cup X_{a,b}^\pi$. The point $p_\ell$ does  not sit in $X_{a,b}^\pi$ but sits in $X_{a,b}^\sigma$, which  is  the surface described by the secant lines to a general hyperplane section of $S_{a,b}$ that is a rational normal curve of degree $r$. Hence, by Proposition \ref {prop:rnc}, $X_{a,b}^\sigma$ is smooth, thus $X_{a,b}^\sigma\cup X_{a,b}^\pi$ is smooth at $p_\ell$ as wanted. 

Let us now turn to the case of $X_{1,r-1}$ and to the point $p_s$. We may assume that $S_{1,r-1}$ is defined by the equations
$$
{\rm rk}
 \left(
\begin{array}{cccccc}
x_0&y_0&y_1&\ldots&y_{r-2}  \\
x_{1}&y_1 &y_2&\ldots&y_{r-1}
\end{array}
\right)<2,
$$
so that the line $s$ is defined by the equations $\{y_i=0\}_{0\leq i\leq r-1}$. An open  neighborhood of $s$ in $\G(1,r+1)$ consists of all  lines joining the points whose homogeneous coordinates are given by the rows of the following matrix
$$
 \left(
\begin{array}{cccccc}
1&0&\xi_0 &\xi_1&\ldots&\xi_{r-1}  \\
0&1&\eta_0&\eta_1&\ldots&\eta_{r-1}
\end{array}
\right)
$$
so that $\xi_0,\ldots, \xi_{r-1}, \eta_0, \ldots, \eta_{r-1}$ are coordinates of a chart $U$ of $\G(1,r+1)$ centered at $s$. A line $t$ parametrized by a point of $U$ has parametric equations of the form
$$
x_0=\lambda, x_1=\mu, y_i=\lambda \xi_i+\mu \eta_i, \quad i=0, \ldots , r-1.
$$
with $[\lambda, \mu]\in \pp^1$. 

The intersection of $t$ with $S_{1,r-1}$ is obtained by solving in $\lambda, \mu$ the system of equations 
$$
{\rm rk}
 \left(
\begin{array}{ccccccccc}
\lambda&\lambda \xi_0+\mu \eta_0&\lambda \xi_1+\mu \eta_1&\ldots&\lambda \xi_{r-2}+\mu \eta_{r-2}   \\
\mu &\lambda \xi_1+\mu \eta_1&\lambda \xi_2+\mu \eta_2&\ldots& \lambda \xi_{r-1}+\mu \eta_{r-1}
\end{array}
\right)<2.
$$
The line $t$ is secant (or tangent) to $S_{a,b}$ if and only if all the degree 2 equations in $\lambda, \mu$ that we obtain in this way are proportional. Let us consider only the equations coming from the minors including the first column. They have the form
$$
\lambda^2\xi_{i+1}+\lambda\mu (\eta_{i+1}-\xi_i)-\mu^2 \eta_i=0, \quad i=0, \ldots, r-2. 
$$
hence the proportionality is given by the equations
\begin{equation}\label{eq: 3-foldSection}
{\rm rk}
 \left(
\begin{array}{ccccccccc}
\xi_1&\xi_2&\ldots&\xi_{r-1}\\
\eta_1-\xi_0&\eta_2-\xi_1&\ldots&\eta_{r-1}-\xi_{r-2}\\
\eta_0&\eta_1&\ldots&\eta_{r-2}\end{array}
\right)<2
\end{equation}
that define the cone over a linear three dimensional section of the Segre embedding of  $\pp^2\times \pp^{r-2}$ in $\pp^{3r-4}$. \end{proof} 

\begin{rem}\label{rem:quod} Suppose $r\geq 3$. Recalling what has been said in \S \ref {sec:1}, we have that the secant map 
$$\gamma_{a,b}:S_{a,b}[2]\longrightarrow X_{a,b}$$  
is:\\
\begin{inparaenum}
\item [$\bullet$] a projective realization of the map $\phi_{b-a,1}: \F_{b-a}[2]\longrightarrow Z_{b-a,1}$, if $a\geq 2$;\\
\item [$\bullet$] a projective realization of the map $\psi_{b-a}: \F_{b-a}[2]\longrightarrow X_{b-a}$, if $a=1$.
\end{inparaenum}

In particular we have that if $2\leq a\leq b$ and $2\leq a'\leq b'$ and $n:=b-a=b'-a'$, then
$X_{a,b}$ is isomorphic to $X_{a',b'}$ and both are isomorphic to $Z_{n,1}$.

This  tells us what is the local nature of the singularity of the variety $Z_{n,2}$, image of the morphism $\phi_{n,2}: \F_n[2]\longrightarrow Z_{n,2}$, at the point $\mathfrak p$ image of the surface $\Ec_n\cong \pp^2$. It suffices to fix $a=1, b=n+1$, so that $r=a+b=n+2$ and we have that $Z_{n,2}$ has at $\mathfrak p$ the same singularity as $X_{1,r-1}$ at the point $\mathfrak p_s$ corresponding to the line $s$ on $S_{1,r-1}$ not belonging to the ruling. So the tangent cone there is the cone over a threefold linear section of the Segre embedding of  $\pp^2\times \pp^{n}$ in $\pp^{3n+2}$, hence it has multiplicity ${n+2} \choose 2$.

If $r=2$, then $a=b=1$, $S_{1,1}\cong \F_0$ is a smooth quadric in $\pp^3$, and the secant map is a projective realization of the map $\psi_0$ appearing in diagram \eqref {eq:diag}, so that $X_0=X_{1,1}$  coincides with $\G(1,3)$, that is a smooth quadric in $\pp^5$. The varieties $Z_{0,1}$ and $Z_{0,2}$ are the blow--ups of $\G(1,3)$ along the two conics $\Gamma$ and $\Gamma'$ that correspond to the two rulings of $S_{1,1}$, and $\F_0[2]$ is the simoultaneous blow--up of these two conics with exceptional divisors $\Fc_0$ and $\Fc'_0$. 
\end{rem}

\section{The case \texorpdfstring{$r=4$}{r=4}}

The case of $S_{2,2}$ has been studied by Fano in \cite {Fa} and, in  recent times, in \cite {AP}. It turns out that $X_{2,2}\subset \pp^{14}$ is a smooth Fano 4--fold of degree 22, of index 2 with canonical curve sections of genus 12. Any smooth hyperplane section of $X_{2,2}$ is a Fano 3--fold of index 1, that has been called in \cite {AP} a \emph{Fano's last Fano} (FlF). 

The surface $S_{1,3}$ is a specialization of $S_{2,2}$ and therefore $X_{1,3}$ is a specialization of $X_{2,2}$. The variety $X_{1,3}\subset \pp^{14}$ has still degree 22 but is no longer smooth, since it has  an isolated singular point of multiplicity 6, with tangent cone the cone over a 3--fold linear section of $\pp^2\times \pp^2$. However the general curve section of $X_{1,3}$ is still smooth and canonical of genus 12. So $X_{1,3}$ is a weak Fano variety, the general threefold section of $X_{1,3}$ is a smooth Fano threefold of index 1, and it is still a FlF.

To better understand in which way the specialization of $X_{2,2}$ to $X_{1,3}$ takes place, it is useful to briefly recall  some of the results in \cite {AP}. 

We have $S_{2,2}\cong \F_0$ and, as we saw in Remark \ref {rem:quod}, $X_{2,2}$ coincides with the variety $Z_{0,1}$, that is the blow--up of the quadric 4--fold $\G(1,3)$ along the conic $\Gamma'$ corresponding to the ruling $|E|$ of $S_{1,1}\cong \F_0$. Note that $\Gamma'$ does not belong to a plane contained in $\G(1,3)$.

It has been proved in \cite {AP} that the projective realization of this is the fact that 
$X_{2,2}\subset \pp^{14}$ is the image of the quadric $\G(1,3)$ via the rational map determined by the linear system $|\mathcal I_{\Gamma',\G(1,3)}(2)|$ of quadric sections of $\G(1,3)$ passing through the conic $\Gamma'$ that does not belong to a plane contained in $\G(1,3)$. As a consequence we have that the general FlF is the blow--up along a conic of a smooth complete intersection of type $(2,2)$ in $\pp^5$. This variety  appears as the
number 16 in the Mori--Mukai list of Fano 3--folds with Picard number 2 (see \cite [Table 2]{MM}).

\begin{rem}\label{rem:qquad}  If $r=2a\geq 4$, then $X_{a,a}$ is isomorphic to $X_{2,2}$ hence it is isomorphic to the blow--up $\tilde \G$ of  $\G(1,3)$ along the conic $\Gamma'$ as above. If $H$ denotes the strict transform on $\tilde \G$ of a general hyperplane section of $\G(1,3)$ and $\mathcal E$ is the exceptional divisor in $\tilde \G$ over the blown--up conic, one sees that the isomorphism of $\tilde \G$ to $X_{a,a}$ is given by the map determined by the linear system $|aH-(a-1)\mathcal E|$. Indeed, one computes
$$
H^4=2, \quad H^3\cdot \Ec=H^2\cdot \Ec^2=0, \quad H\cdot \Ec^3=2, \quad \Ec^4=6
$$
hence 
$$
(aH-(a-1)\mathcal E)^4=12a^2-16a+6=3r^2-8r+6=\deg (X_{a,a}).
$$
To compute $h=\dim (|aH-(a-1)\mathcal E|)$, we first notice that the linear system of hypersurfaces of $\pp^5$ of degree $a$ having multiplicity $a-1$ along a smooth conic, is computed to be $5\frac {(a+1)a} 2$ (the computation can be left to the reader). So $5\frac {(a+1)a} 2-(h+1)$ is the dimension of the linear system of hypersurfaces of degree $a-2$ in $\pp^5$ that have points of multiplicity $a-2$ along a smooth conic, and these are cones with vertex the plane of the conic. Hence
$$
5\frac {(a+1)a} 2-(h+1)=\frac {(a+1)(a-2)}2, 
$$
so that
$$
h=\dim (|aH-(a-1)\mathcal E|)=a(2a+3)=\frac {r(r+3)}2 
$$
that is the embedding dimension of $X_{a,a}$.

It is also interesting to notice that the degree of the exceptional divisor $\Ec$  in $X_{a,a}$ is
$$
(aH-(a-1)\mathcal E)^3\cdot \Ec=6(a-1)^2.
$$
Moreover since $\dim (|aH-a\mathcal E|)=\frac {(a+3)a}2$, we see that the span of $\Ec$ in $X_{a,a}$ has dimension
$$
a(2a+3)-\frac {(a+3)a}2-1=3\frac {(a+1)a}2-1.
$$
In general $\Ec$ is swept out by a 1--dimensional family of $(a-1)$--Veronese surfaces. 
For $a=2$, $\Ec$ is in fact a rational normal scroll threefold of degree 6 in a $\pp^8$. \end{rem}

If we consider $S_{1,3}\cong \F_2$ the above picture changes. To put things in the general perspective we consider the following situation.

Extending the definition of $S_{a,b}$ we can consider $S_{0,r} \subset \pp^{r+1}$ as the cone over the rational curve, image of ${\mathbb F}_r$ via the morphism determined by the linear system $|E+rF|$. Extending the definition of $\gamma_{a,b}$ we obtain a rational map 
$$
\gamma_{0,r}:  \F_{r}[2] \dashrightarrow \G(1,r+1)\subset \pp^{\frac {r(r+3)}2}
$$ 
whose image we denote by $X_{0,r}$. The map is not defined exactly along the surface $\Ec_r \cong \pp^2$ of the pair of points on $E$.

Resolving the indeterminacy we obtain the following commutative diagram, that we are going  to explain. 
\begin{equation*}
\xymatrix{ 
\pp^2 \times \pp^r \ar@{->>}^{\pi_2}[rrrr]\ar@{->>}_{\pi_1}[ddd]&&&&\pp^r \ar@{^{(}->}[dddd]&&&&&\\
&{\mathcal D}_r \ar@{->>}[dd] \ar@{^{(}->}[lu]\ar@{^{(}->}[rd] \ar@{->>}[rr]&&{\rm Sec} (C_r)\ar@{^{(}->}[ru]\ar@{^{(}->}[ddd] &&&&&&\\
&&\widetilde{\F_r[2]}\ar@{->>}[dd] \ar@{->>}[ddr]&&&&&&&\\
\pp^2\ar@{<->}^{\sim}[r]&\Ec_r  \ar@{^{(}->}[rd]\ar@{->>}[dd] &&&&&&&&\\
&&\F_r[2]\ar@{->>}^{\phi_{n,1}}[dd] \ar@{-->>}_{\gamma_{0,r}}[r] &X_{0,r}\ar@{^{(}->}[r]&\mathbb G(1,r+1)&&&&&\\
&\left\{ {\mathfrak p} \right\} \ar@{^{(}->}[rd]&&&&&&&&\\
&&Z_{r,1}&&&&&&&
}
\end{equation*}

Recall that the map $\phi_{n,1} \colon \F_r[2] \rightarrow Z_{r,1}$ is the contraction of $\Ec_r$ to a singular point $\mathfrak p$.  A neighbourhood of $\mathfrak p$ in $ Z_{r,1}$ is isomorphic to a neighbourhood  $U$ of the only singular point of $X_{1,r+1}$, and $U$ was described in the proof of Theorem \ref{thm:sing} as defined, in a chart of a Grassmannian, by the minors of order $2$ of the $3 \times r$ matrix $A$ analogous to the matrix in \eqref{eq: 3-foldSection} (that was a $3 \times (r-2)$ matrix because we were describing $X_{1,r-1}$). 

Each column of $A$ defines a rational map  $U \dashrightarrow \pp^2$, map that does not depend on the choice of the column, undefined exactly at $\mathfrak p$. The reader can easily check that this map, undefined at $\mathfrak p$, lifts to a morphism on $\F_r[2]$, the morphism $\F_r[2] \rightarrow \pp^1[2] \cong \pp^2$ induced by the ruling $\F_r \rightarrow \pp^1$. 
We blow up $Z_{r,1}$ at $\mathfrak p$. The exceptional divisor over $\mathfrak p$ is (Theorem \ref{thm:sing}) the smooth threefold linear section ${\mathcal D}_r$ of the Segre embedding of $\pp^2 \times \pp^r$ in $\pp^{3r+4}$ defined exactly by minors of order $2$ of the matrix $A$. The columns of $A$ define on ${\mathcal D}_r$  the projection $\pi_1$ on the first factor $\pp^2$ which is then a lift of the analogous map just considered on $\F_r[2] $: this shows that the blow up of $Z_{r,1}$ at $\mathfrak p$ factors through $\F_r[2]$ and in fact coincides with the blow up  $\widetilde{\F_r[2]}$ of  $\F_r[2]$  at $\Ec_r$.
This completes the description of the first three columns of the diagram.

Projecting ${\mathcal D}_r$ to the second factor, $\pp^r$, we obtain a map  that is a $\pp^{3-r}$ bundle if $r=1,2$. If $r\geq 3$, ${\mathcal D}_r$ maps birationally to the secant variety of the rational normal curve. 
In fact $\gamma_{0,r}$, undefined at $\Ec_r$, lifts to a morphism on $\widetilde{\F_r[2]}$ that maps each point of ${\mathcal D}_r$ to a line through the singular point of $S_{0,r}$. The lines through that point form a $\pp^r$ in $\mathbb G(1,r+1)$ and a simple explicit computation in coordinates shows that ${\mathcal D}_r$ maps in it as the second projection $\pi_2$. 

It is worth noticing that the map $\gamma_{0,1}$ is a flip, whereas the map $\gamma_{0,2}$ is a \emph{Mukai flop}, see \cite{WierzbaWisniewski}.

So, when the smooth quadric $S_{1,1}$ flatly degenerates to the singular quadric $S_{0,2}$ (this degeneration can be realized in a linear pencil of quadrics in $\pp^3$), we see that the pair $(\G(1,3), \Gamma')$ (with the conic $\Gamma'$ not contained in a plane of $\G(1,3)$) to be blown up along $\Gamma'$ to get $X_{2,2}$, degenerates to pair $(\G(1,3), \Gamma')$, with $\Gamma'$  contained in a plane of $\G(1,3)$, to be blown up along $\Gamma'$ to get $X_{1,3}$. The singularity of $X_{1,3}$ arises as the contraction to a point of the strict transform of the plane containing $\Gamma'$. 

\begin{rem}\label{rem:eq} We notice that any FlF $Y$ arises as the hyperplane section of $X_{1,3}$. Indeed, as we know, $Y$ can be obtained in the following way. There is $V$, the general complete intersection of two quadrics $Q$ and $Q'$ in $\pp^5$, and there is a smooth conic $\Gamma\subset V$, such that $Y$ is the image in $\pp^{13}$ via the $13$--dimensional linear system $|\mathcal I_{\Gamma,V}(2)|$ of quadric sections of $V$ containing $\Gamma$. Let $\Pi$ be the plane containing $\Gamma$. In the pencil generated by $Q$ and $Q'$, there is a unique quadric $Q_0$ containing $\Pi$. The image of $Q_0$ in $\pp^{14}$ via the $14$--dimensional linear system $|\mathcal I_{\Gamma,Q_0}(2)|$ of quadric sections of $Q_0$ containing $\Gamma$ is an $X_{1,3}$ having $Y$ as a hyperplane section. \end{rem}

\begin{rem}\label{rem:deg} One can consider degenerations of $X_{2,2}$ and of $X_{1,3}$ in the following way. Consider a smooth quadric $Q$ in $\pp^5$ and a singular reduced conic $\Gamma$ contained in $Q$. Then take the image $X$ of $Q$ in $\pp^{14}$ via the $14$--dimensional linear system $|\mathcal I_{\Gamma,Q}(2)|$ of quadric sections of $Q$ containing $\Gamma$. If $\Gamma$ is not contained in a plane of $Q$, $X$ is a degeneration of $X_{2,2}$ that is singular along  a line. If $\Gamma$ is contained in a plane of $Q$, then $X$ is also a degeneration of $X_{1,3}$. 

Similarly, one can consider a singular quadric $Q$ in $\pp^5$ and a smooth  conic $\Gamma$ contained in the smooth locus of $Q$. Then take the image $Z$ of $Q$ in $\pp^{14}$ via the $14$--dimensional linear system $|\mathcal I_{\Gamma,Q}(2)|$ of quadric sections of $Q$ containing $\Gamma$. Then  $Z$ is a singular degeneration of $X_{2,2}$ (and also of $X_{1,3}$, if the plane of $\Gamma$ is contained in $Q$). 

Arguing as in Remark \ref {rem:eq}, we see that a general FlF $Z$ arises as the hyperplane section of such a $X$. 

One can also consider degenerations of FlF to singular threefolds in various ways. We keep the notation of Remark \ref {rem:eq}. One possibility is to keep the complete intersection $V$  of two quadrics $Q$ and $Q'$ in $\pp^5$ smooth, but to take the conic $\Gamma$, to be blown up to get the FlF $Y$, singular but reduced. In this case $Y$ has a simple double point. 

Another possibility, suggested to us by Ivan Cheltsov, is to take $V$ singular and the conic $\Gamma$ still smooth. For example, we can take $V$ as  the image of $\pp^3$ via the linear system of quadrics passing through four points $p_1,\ldots, p_4\in \pp^3$ in general position. If we blow--up $\pp^3$ at $p_1,\ldots, p_4$, with exceptional divisors $E_1,\ldots, E_4$, the above map becomes a morphism on this blow--up $\widetilde { \pp^3}$, that maps $E_1,\ldots, E_4$ to four planes and contracts the strict transforms of the six lines pairwise joining $p_1,\ldots, p_4$ to six simple double points.  There are many conics on $V$, for instance the images of general lines of $\pp^3$. If we blow--up one of them we get a weak FlF double at six points. 
\end{rem}

\section{A GIT analysis}

As we noticed in Remark \ref {rem:eq}, any FlF is a hyperplane section of $X_{1,3}\subset \pp^{14}$. 
Let $G$ be the automorphism group $G$ of $S_{1,3}={\mathbb F}_2$. 
The group $G$ has dimension 7 and it is isomorphic to the automorphism group of a quadric cone in $\pp^3$. 
This is an extension of ${\rm PGL}(2,\C)$ with the 4--dimensional normal subgroup $G_0$ of the projective transformation of $\pp^3$ that fix a point $x$ and also map every line of $\pp^3$ through $x$ to itself. 
The group $G$ acts as a group of projective trasformations of $X_{1,3}$. 
Recall that $X_{1,3}$ has a unique singular point $p$ and contains a rational normal quartic curve $\Gamma$ that is the image of the lines of the ruling of $S_{1,3}$. 
Of course $G$ fixes $p$ and maps $\Gamma$ to itself inducing on it the action of ${\rm PGL}(2,\C)$. 
Moreover $G$ acts in a natural way on the dual space $\mathcal H:=(\pp^{14})^\vee$ (and on the vector space $H^0(X_{1,3}, \Oc_{X_{1,3}}(1))$) by acting on the hyperplane sections of $X_{1,3}$ that, if smooth, are FlFs. 
 We want to understand the $G$--semistable elements of $\mathcal H$. A partial answer to this question is the following:
 
 \begin{prop}\label{prop:sstab} 
 Let $H$ be a hyperplane section of $X_{1,3}$ not containing the curve $\Gamma$ and cutting out on $\Gamma$ a divisor not containing a point with multiplicity 3. Then $H$ is $G$--semistable.
 \end{prop}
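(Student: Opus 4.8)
The plan is to pass to the skew-form model of $\mathcal H$ and run the Hilbert--Mumford numerical criterion, reducing everything to the classical stability of binary quartics. Since $X_{1,3}$ is linearly normal and spans $\pp^{14}=\pp(\wedge^2\C^6)$ (Corollary \ref{cor:span}), with $\C^6=H^0(S_{1,3},\Oc_{S_{1,3}}(1))$ on which $G$ acts, a hyperplane section $H$ is the same as a skew form $\omega=\sum_{i<j}\omega_{ij}\,e_i^*\wedge e_j^*\in\wedge^2(\C^6)^\vee$, and a point $\mathfrak y\in X_{1,3}$ lies on $H$ exactly when $\omega(p,q)=0$ for $\ell_{\mathfrak y}=\langle p,q\rangle$. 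Writing the coordinates of $\C^6$ as $x_0,x_1$ (for the summand $\Oc_{\pp^1}(1)$) and $y_0,\dots,y_3$ (for $\Oc_{\pp^1}(3)$), the curve $\Gamma$ is the image of the ruling, parametrized by the lines $\langle(s,t,0,0,0,0),\,(0,0,s^3,s^2t,st^2,t^3)\rangle$; it is a rational normal quartic and the restriction of $\omega$ to it is the binary quartic
\[
Q_H(s,t)=\omega_{02}\,s^4+(\omega_{03}+\omega_{12})\,s^3t+(\omega_{04}+\omega_{13})\,s^2t^2+(\omega_{05}+\omega_{14})\,st^3+\omega_{15}\,t^4,
\]
whose zero divisor on $\Gamma\cong\pp^1$ is $H\cap\Gamma$. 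Hence the two hypotheses translate \emph{exactly} into: $Q_H\not\equiv0$ (i.e.\ $H\not\supseteq\Gamma$) and $Q_H$ has no root of multiplicity $\geq3$. By the classical Hilbert--Mumford analysis of binary forms this says precisely that $Q_H$ is a nonzero ${\rm SL}(2,\C)$-semistable quartic, and the restriction $\omega\mapsto Q_H$ is equivariant for the base ${\rm PGL}(2,\C)\subset G$.

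I fix the natural $G$-linearization of $\Oc_{\mathcal H}(1)$ and a maximal torus $T\cong\mathbb{G}_m^2$ of $G$, generated by the diagonal torus of the base ${\rm PGL}(2,\C)$ and by the torus scaling the two summands of $\Oc(1)\oplus\Oc(3)$. For the one-parameter subgroup $\lambda=(p,q)$, with $p$ the base parameter and $q$ the bundle parameter, the weights of $x_0,x_1,y_0,y_1,y_2,y_3$ are
\[
p+q,\quad -p+q,\quad 3p-q,\quad p-q,\quad -p-q,\quad -3p-q .
\]
The decisive point is that the eight \emph{mixed} coordinates $\omega_{0j},\omega_{1j}$, which are exactly the ones appearing in $Q_H$, then acquire $T$-weights $-4p,-2p,-2p,0,0,2p,2p,4p$ \emph{independent of $q$}; grouped into the coefficients of $Q_H$ these are the standard binary-quartic weights $-4p,-2p,0,2p,4p$ attached to $s^4,s^3t,s^2t^2,st^3,t^4$.

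It remains to check $\mu(H,\lambda)\geq0$ for every one-parameter subgroup $\lambda$. As every one-parameter subgroup of the connected group $G$ generates a torus, it is conjugate into $T$; since the two hypotheses are $G$-invariant (they only involve the $G$-stable curve $\Gamma$ and the ${\rm PGL}(2,\C)$-action on it), the conjugation equivariance $\mu(H,g\lambda_0 g\inv)=\mu(g\inv H,\lambda_0)$ reduces the claim to $\lambda_0=(p,q)\in T$, for all admissible forms. Assume $(p,q)$ destabilizes $\omega$, i.e.\ every nonzero coordinate of $\omega$ has strictly positive weight. If $p=0$, all mixed weights vanish, so the nonvanishing of $Q_H$ produces a nonzero coordinate of weight $0$, a contradiction. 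If $p\neq0$, then every nonzero coefficient of $Q_H$ has strictly positive weight for the ${\rm SL}(2,\C)$-one-parameter subgroup ${\rm diag}(c^p,c^{-p})$, so this subgroup destabilizes $Q_H$ --- contradicting the ${\rm SL}(2,\C)$-semistability of $Q_H$ noted above. Therefore no one-parameter subgroup destabilizes $H$, and $H$ is $G$-semistable.

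The main obstacle is foundational rather than computational. Because $G$ is non-reductive --- its unipotent radical is $\mathbb{G}_a^3=\Hom(\Oc(1),\Oc(3))$ --- and has character group $\Z$, one must first make precise what ``$G$-semistable'' means (here: non-destabilization by every one-parameter subgroup) and, crucially, fix the \emph{correct} $G$-linearization, namely the balanced one for which the central ``bundle'' direction does not destabilize every point; it is exactly this choice that renders the mixed weights independent of $q$. Once the linearization is pinned down and the reduction to a maximal torus is justified, the entire statement collapses onto the classical fact that a nonzero binary quartic is ${\rm SL}(2,\C)$-semistable if and only if it has no root of multiplicity $\geq3$ --- which is precisely the content of the two hypotheses.
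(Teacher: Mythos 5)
Your reduction of the statement to the semistability of the binary quartic cut out by $H$ on $\Gamma$ is exactly the heart of the paper's proof, and your translation of the two hypotheses into ``$Q_H$ is a nonzero binary quartic with no root of multiplicity $\geq 3$, hence ${\rm SL}(2,\C)$--semistable'' is the same key input (the paper quotes \cite[Prop.\ 4.1]{M} for it). What differs is the mechanism, and yours proves a formally weaker statement. The paper argues on orbit closures: if $H$ were unstable there would be a sequence $g_n\in G$ with $g_n.s\to 0$; since restriction of sections to the $G$--invariant curve $\Gamma$ is $G$--equivariant, $g_n.\sigma\to 0$ for $\sigma=s|_\Gamma$, contradicting the semistability of the quartic. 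This uses the numerical/Hilbert--Mumford theory only for the reductive group ${\rm SL}(2,\C)$. You instead run the numerical criterion for $G$ itself, and --- as you yourself flag --- $G$ is not reductive, so ``no one--parameter subgroup drives $s$ to $0$'' does not imply ``$0\notin\overline{G\cdot s}$''. The latter is the notion the paper actually uses (its proof starts from an arbitrary sequence $g_n$, not a one--parameter subgroup) and the one needed to form $\mathcal H^{\rm ss}$ and its quotient afterwards. Redefining semistability as non--destabilization by one--parameter subgroups sidesteps this gap rather than closing it. The repair is cheap and is essentially contained in your own computation: your weight bookkeeping shows that, for your linearization, the restriction map $\omega\mapsto Q_H$ to $H^0(\Gamma,\Oc_\Gamma(1))\cong H^0(\pp^1,\Oc_{\pp^1}(4))$ is $G$--equivariant with the central torus and the unipotent radical acting trivially on the target; hence $0\in\overline{G\cdot\omega}$ forces $0\in\overline{{\rm SL}(2,\C)\cdot Q_H}$, and the classical fact about binary quartics finishes the argument for every sequence, not just one--parameter subgroups.

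On the positive side, your explicit identification of $\mathcal H$ with $\pp(\wedge^2(\C^6)^\vee)$, the parametrization of $\Gamma$, and above all the discussion of the linearization are genuinely useful supplements: the paper's sequence argument also tacitly requires that the central one--parameter subgroup of $G$ not act on $H^0(\Gamma,\Oc_\Gamma(1))$ through a nontrivial character (otherwise $g_n.\sigma\to 0$ for \emph{every} $\sigma$ and no contradiction arises), and this is precisely the ``balanced'' lift under which your mixed Pl\"ucker weights are independent of $q$. So keep the computation, but hang it on the paper's logical skeleton (equivariant restriction plus orbit closures) rather than on a Hilbert--Mumford criterion for the non--reductive group $G$.
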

 
 \begin{proof} We argue by contradiction. Suppose that $H$ is not semistable. Let $s\in H^0(X_{1,3}, \Oc_{X_{1,3}}(1))$ be a non--zero section, determined up to a constant, that vanishes on $H$. Then there is a sequence $\{g_n\}_{n\in \mathbb N}$ of elements of $G$ such that $\lim_{n}g_n.s=0$. But then, if $\sigma$ is the restriction of $s$ to $\Gamma$ we also have $\lim_{n}g_n.\sigma=0$ and this is a contradiction because the divisor cut out by $H$ on $\Gamma$ is ${\rm PGL}(2,\C)$--semistable (see \cite [Prop. 4.1] {M}). 
 \end{proof}
 
 The conclusion is that $\mathcal H^{\rm ss}$ is non--empty, and therefore there exists the categorical quotient $\Mm=\mathcal H^{\rm ss}\varparallel  G$, that has dimension $7$. Note that the Kuranishi family of a FlF has also dimension $7$ (see \cite{fanography, belmansfatighentitanturri}). 
Hence locally we can consider $\Mm$ as a finite cover of the Kuranishi family. It follows that $G$ is the component of the identity of the group of projective automorphisms of $X_{1,3}$ (we believe that in fact $G$ is equal to the group of projective transformations of $X_{1,3}$ but we have not been able to prove it so far). Moreover  we can consider somehow $\Mm$  as a moduli space of FlFs. 
However some crucial questions remains open. For example:\\
 \begin{inparaenum}
 \item [(i)] are all smooth hyperplane sections of $X_{1,3}$ $G$ (semi)stable?\\
 \item [(ii)] are two smooth hyperplane sections of $X_{1,3}$ isomorphic if and only if they are $G$--isomorphic?\\
\item [(iii)] are FlFs $K$--stable? 
 \end{inparaenum}
 
An affirmative answer to question (iii) would provide a true moduli space for FlFs. We will not deal with these questions here.

\end{document}